\newtheorem{theorem}{Theorem}[section]
\newtheorem{lemma}[theorem]{Lemma}
\newtheorem{corollary}[theorem]{Corollary}
\newtheorem{proposition}[theorem]{Proposition}
\newtheorem{claim}{Claim}[section]
\theoremstyle{definition}
\newtheorem{definition}[theorem]{Definition}
\theoremstyle{remark}
\newtheorem{remark}[theorem]{Remark}
\newcommand{\mc}[1]{\mathcal{#1}}
\DeclareMathOperator{\At}{At}
\newcommand\mathcircled[1]{%
	\mathpalette\@mathcircled{#1}%
}
\newcommand\@mathcircled[2]{%
	\tikz[baseline=(math.base)] \node[draw,circle,inner sep=1pt] (math) {$\m@th#1#2$};%
}
\begin{document}
	
\title{Computable Stone spaces}

\author{Nikolay Bazhenov}
\address{Sobolev Institute of Mathematics, Novosibirsk, Russia}
\address{Novosibirsk State University, Novosibirsk, Russia}
\email{bazhenov@math.nsc.ru} 

\author{Matthew Harrison-Trainor}
\address{Department of Mathematics, University of Michigan, MI, USA}
\email{matthhar@umich.edu}

\author{Alexander Melnikov}
\address{School of Mathematics and Statistics, Victoria University of Wellington, New Zealand \and Sobolev Institute of Mathematics, Novosibirsk, Russia}
\email{alexander.g.melnikov@gmail.com} 

\thanks{Melnikov and Harrison-Trainor were partially supported by RDF - VUW1902,  Royal Society Te Ap\={a}rangi.}

\thanks{The work of Bazhenov and Melnikov is supported by the Mathematical Center in Akademgorodok under agreement No. 075-15-2022-281 with the Ministry of Science and Higher Education of the Russian Federation.}

\date{\today}

\begin{abstract}
We investigate computable metrizability of Polish spaces up to homeomorphism. In this paper we focus on Stone spaces. 
We use Stone duality to construct the first known example of a computable topological Polish space not homeomorphic to any computably metrized space.
In fact, in our proof we construct a right-c.e.~metrized Stone space which is not homeomorphic to any computably metrized  space.
Then we  introduce a new notion of effective categoricity for effectively compact spaces and prove that effectively categorical Stone spaces are exactly the duals of computably categorical Boolean algebras. 
Finally,  we prove that, for a Stone space $X$,
the Banach space $C(X;\mathbb{R})$  has a computable presentation if, and only if,  $X$ is \emph{homeomorphic} to a computably metrized space.
This gives an unexpected positive partial answer to a question recently posed by McNicholl.
\end{abstract}

\keywords{Stone space, computable topological space, computable categoricity, computable Polish space}

\subjclass{03D78}

\maketitle

\tableofcontents

\section{Introduction}

In this paper we use Boolean algebras and Stone spaces to prove new results in computable topology and computable Banach space theory.
The paper contributes to a fast-developing subject in computable analysis which is focused on applying effective algebraic techniques to study separable spaces and Polish groups.
This program was initiated in \cite{MelIso}; we cite \cite{Clanin.McNicholl.Stull.2019,McNellp,leb} for several recent results into this direction, and we also cite ~\cite{MDsurvey} for a detailed exposition of this approach.
The main objects of study in this theory are computable separable Banach spaces and computable Polish spaces and groups.
The main idea behind this new approach to separable spaces is that computable algebra can be viewed as a special case of separable metric space theory. In particular, with some effort one can sometimes extend results and techniques from computable discrete algebra to separable spaces. To a researcher interested in constructive aspect of mathematics, such investigations give a fine grained ``constructive'' analysis of proofs and processes in separable structures. To those more interested in classical mathematics, results of this sort provide formal estimates for the complexity of the classification problem in familiar classes of separable structures and spaces.\footnote{Indeed, as explained in \cite{MDsurvey, DoMo, GonKni},  computable results tend to be relativizable to an arbitrary oracle, and thus can be systematically used to measure the complexity of the classification problem for not necessarily computable structures. However, in this particular paper applications to classification problems will not be considered.}

Unfortunately, generalizing proofs and ideas of computable algebra to discrete spaces  is far from being routine or even systematic. Rather, such results  tend to rely on  intricate manipulations with presentations  and exploit results from multiple different areas of mathematics. For example, \cite{MelIso}  uses Pontryagin duality from abstract harmonic analysis, pregeometries from model theory, and a theorem of Dobrica~\cite{feebledobrica} from computable group theory. Further instances of this phenomenon can be found in \cite{HKSel-arxiv,  HTMN-ta, lupini} which blend advanced priority techniques, definability, and homological algebra with new methods specific to the subject. Thus, one of the main goals of the emerging theory is to bring some  order and system into the chaos of this tricky mathematics. For that, we need more general ``standard'' results and techniques one can systematically use beyond one specific application.

We believe that the present article partially fulfils this goal. More specifically,  we develop a certain technique that allows to transfer computability-theoretic results about countable Boolean algebras to results about separable spaces. This idea is, of course, not entirely new. Classically, one uses Stone duality to study completely disconnected compact spaces. The recent papers \cite{HKSel-arxiv,  HTMN-ta} apply two different computable versions of Stone duality to solve an open problem in computable topology.
We also cite~\cite{Tran:thesis}.
In the present paper we use Stone duality to prove three theorems about computable separable spaces. Even though we do need novel ideas different from those in the aforementioned works 
\cite{HKSel-arxiv,  HTMN-ta}, there is a certain systematic \emph{approximate definability analysis} of totally disconnected subspaces which unites all our results. This relationship is more of a technical one, but when the reader sees the proofs they will likely agree that our methods are systematic rather than based on a collection of tricks\footnote{While the paper was under review, our methods have found several further applications. For instance, various further effective dualities have been established in~\cite{lupini,EffedSurvey,tdlc}.}.

We are ready to discuss our results. Before we do so, we note that one of our main results (Theorem~\ref{thm1}) is concerned with Banach spaces rather than Polish spaces; nonetheless, it will almost immediately become clear how Stone spaces can help us in the study of computable  Banach spaces.

\

Our first main result answers a fundamental question in the foundations of computable topology.
One of the first tasks in any emerging theory is to establish the equivalence (or non-equivalence) of some of the most basic definitions and assumptions
which lie at the foundations of the theory. Point-set topology is notorious for its zoo of various notions of regularity of spaces, the most fundamental of which are known to be non-equivalent via relatively straightforward but clever counterexamples.
In stark contrast, \emph{computable} topology seems to be essentially completely missing the proofs that many of its computability-theoretic notions are (non-)equivalent.
This is partially explained by the fact that proving (non-)equivalence of such notions presents a significant challenge. For instance, it takes much effort even to prove that there exists a $
\Delta^0_2$-metrized Polish space not homeomorphic to a computably metrized one\footnote{All terms used in the introduction will be clarified in the preliminaries.}; see \cite{HKSel-arxiv, HTMN-ta} for three substantially different proofs all of which are  non-trivial. Another example is the recent construction of a computably metrized compact space not homeomorphic to any  effectively compact space suggested in \cite{HKSel-arxiv}.
Recall that a computably metrized space is \emph{effectively compact} if there is a computable enumeration of all of its finite covers by basic open balls of radius $2^{-n}$, uniformly in $n$. The recent paper~\cite{lupini} uses topological group theory and homological algebra to produce  an example of a computably metrized connected compact Polish \emph{group} not homeomorphic to any effectively compact Polish \emph{space}. 
 Ng has recently announced a third proof of this fact  that uses a $\mathbf{0}'''$ priority construction. We note that, in contrast, every computably metrized Stone space is homeomorphic to an effectively compact one; this follows from the results in~\cite{HKSel-arxiv, HTMN-ta} as explained in the introduction of \cite{HTMN-ta}.  All these results mentioned above are very recent and rely on  advanced techniques.

The results discussed above separate the notions of effective metrizability, $\Delta^0_2$-metrizability, and effective compactness for Polish spaces viewed under homeomorphism.
There are also other notions of computability which frequently appear in the literature on computable topology, perhaps the most general of which is the point-free notion of a computable topological space. The notion merely requires existence of a computably enumerated base of the topology such that the intersection of two basic sets is uniformly computably enumerable.
But is this notion \emph{really} more general than, say, computable metrizability for compact Polish spaces?
Weihrauch and Grubba \cite{comptop} showed that, under a certain effective regularity assumption, a computable topological Polish space admits a computable metric.
The proof in \cite{comptop} builds a metric which is effectively compatible with the given computable topology via the identity operator.
Perhaps, we could drop the  effective regularity assumption and still  construct a computably metrized space \emph{homeomorphic} to the given computable topological space?
From the perspective of topology, this would make computable metrizability equivalent to computable topological presentability.
Note that we do not require the homeomorphism to be computable, and therefore it may seem that most geometrically natural potential counterexamples can be dynamically ``squished'' to form a computably metrized space.  Nonetheless, we use Stone spaces and Boolean algebras to prove:

\begin{theorem}\label{theo:no_comp_Polish}
	There exists a computable topological compact Polish space $S$ not homeomorphic to any computably metrized Polish space.
\end{theorem}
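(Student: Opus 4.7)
The plan is to leverage Stone duality. For a Boolean algebra $B$, let $S(B)$ denote its Stone space of ultrafilters; then $S(B_1) \cong S(B_2)$ iff $B_1 \cong B_2$, and the dual $B(X)$ of a Stone space $X$ is its Boolean algebra of clopens. The strategy is to calibrate which levels of computability on the algebraic side correspond to which on the topological side, and then to work on the algebraic side.

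The key correspondences are: (i) if a Stone space $X$ is computably metrized, then $B(X)$ has a computable isomorphic copy --- this invokes the cited fact \cite{HKSel-arxiv, HTMN-ta} that every such $X$ is homeomorphic to an effectively compact space, from which one can enumerate all finite clopen covers and thereby decide Boolean operations on clopens; and (ii) if $B$ has a \emph{positive} presentation (an effective enumeration of elements together with a c.e.\ inclusion relation), then $S(B)$ is a computable topological Polish space, since the elements of $B$ index basic clopens whose intersection relation coincides with the c.e.\ order.

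By (i) and (ii), Theorem~\ref{theo:no_comp_Polish} reduces to the algebraic statement that there exists a countable Boolean algebra $B$ with a positive presentation but no computable isomorphic copy. Such a $B$ is built by a priority argument: stage by stage we enumerate elements and inclusion facts of $B$ (never retracting) and meet the requirements $R_e : B \not\cong C_e$ where $\{C_e\}_{e \in \N}$ enumerates all computable Boolean algebras. To handle $R_e$, we maintain a finite ``gadget'' inside $B$ --- for example, a controlled finite interval whose isomorphism type we can still shape --- and once $C_e$ has revealed enough of the purported matching interval, we finalize the gadget in a configuration that cannot match $C_e$.

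The principal obstacle is reconciling the monotonicity of a positive presentation (we may only add, never delete) with the diagonalization demand: any irrevocable commitment made too early risks forcing $B$ into isomorphism with some $C_e$. The standard remedy is to keep the gadgets ``open-ended'' --- deferring Boolean identities among auxiliary elements --- and finalize only once the approximation of each $C_e$ permits. Given such $B$, its Stone space $S(B)$ is a computable topological Polish space by (ii); and by (i) together with Stone duality, $S(B)$ cannot be homeomorphic to any computably metrized Polish space, for otherwise the clopens of that space would furnish a computable copy of $B(S(B)) \cong B$, contradicting the construction.
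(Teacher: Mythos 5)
Your overall architecture matches the paper's: pass through Stone duality, observe that a c.e.\ (positively) presented Boolean algebra yields a computable topological Stone space, and use the equivalence between computable metrizability of $X$ and computable presentability of the dual algebra (Theorem~1.1 of \cite{HTMN-ta}, the paper's Proposition~\ref{prop:HTMN}) to rule out computable metrizability. Your step (ii) is in fact more elementary than what the paper does: the paper proves the stronger Theorem~\ref{theo:Boolean}, building a right-c.e.\ \emph{metric} on $\widehat{B}$ via a co-c.e.\ pruned tree and then invoking Proposition~\ref{prop:right-ce}, whereas you read the point-free computable topological structure directly off the positive presentation (namely $U_a\cap U_b=U_{a\wedge b}$ with $\wedge$ computable on indices). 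That shortcut is legitimate and suffices for Theorem~\ref{theo:no_comp_Polish}, though it does not yield the paper's Corollary on right-c.e.\ metrized spaces.

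The genuine gap is your step 3. The existence of a positively presented Boolean algebra with no computable copy is precisely Feiner's theorem \cite{feiner1970}, which the paper simply cites, and the priority construction you sketch would not prove it. A requirement $R_e\colon B\not\cong C_e$ cannot be met by finalizing a finite ``gadget'': a finite partial subalgebra carries no isomorphism invariant (all finite Boolean algebras of a given size are isomorphic), and an isomorphism $B\cong C_e$ is under no obligation to carry your gadget to the ``purported matching interval'' of $C_e$ that you have been monitoring --- it is an unrestricted existential over all bijections, so no finite, irrevocable configuration can block it. This is exactly why Feiner's proof proceeds through the hierarchy of $\Sigma_n$ isomorphism invariants (iterated Fr\'{e}chet/Cantor--Bendixson-type quotients), coding a set of suitably high arithmetic complexity into invariants that would be too simple if $B$ had a computable copy, rather than through finite diagonalization. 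Replace your construction with a citation to Feiner; with that substitution the remainder of your argument is sound.
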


The reader will perhaps be surprised that the result is indeed new, for it may look like a theorem that should have been proven long ago.
The key step in the proof is a new effective version of Stone duality, Theorem~\ref{theo:Boolean},  which says that a Stone space $S$ is homeomorphic to a right-c.e.~metrized, effectively compact space if, and only if, the dual Boolean algebra admits a c.e.~presentation. It is well-known that right-c.e.~metric spaces are computable topological spaces; we will explain this in the preliminaries. Recall that  Feiner~\cite{feiner1970} constructed an example of a  c.e.~presentable Boolean algebra  which does not have a computable presentation.  Thus, Theorem~\ref{theo:Boolean} stated above combined with the result of Feiner  gives Theorem~\ref{theo:no_comp_Polish}. It also follows that our proof really gives more than is stated in Theorem~\ref{theo:no_comp_Polish}:

\begin{corollary} There exists a right-c.e.~metrized Polish space not homeomorphic to any computably metrized Polish space.
\end{corollary}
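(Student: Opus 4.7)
The corollary is really an observation about the \emph{proof} of Theorem~\ref{theo:no_comp_Polish} rather than about its statement, so the plan is simply to unpack that proof and note that the space it produces is already right-c.e.\ metrized.

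Concretely, I would start from Feiner's c.e.\ presentable Boolean algebra $B$ that has no computable copy, let $S=\mathrm{Stone}(B)$ be its Stone dual, and invoke the forward (``if'') direction of Theorem~\ref{theo:Boolean} to produce a right-c.e.\ metrized, effectively compact Polish space $X$ homeomorphic to $S$. Since (as the paper promises to verify in the preliminaries) every right-c.e.\ metrized space is automatically a computable topological space, this $X$ is exactly the object used to witness Theorem~\ref{theo:no_comp_Polish}. Thus $X$ is a right-c.e.\ metrized Polish space, and the argument already given for Theorem~\ref{theo:no_comp_Polish} shows that it is not homeomorphic to any computably metrized Polish space: otherwise, combining the ``only if'' direction of Theorem~\ref{theo:Boolean} with the cited fact from \cite{HKSel-arxiv,HTMN-ta} that every computably metrized Stone space is homeomorphic to an effectively compact one, one would extract a computable presentation of $B$, contradicting Feiner's theorem.

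The main obstacle here is nothing new: all of the work sits in proving Theorem~\ref{theo:Boolean} (and in invoking the cited effective-compactness reduction for computably metrized Stone spaces), both of which are handled elsewhere in the paper. The only content specific to the corollary is the observation that the Polish space $X$ constructed in the course of proving Theorem~\ref{theo:no_comp_Polish} comes, by construction, equipped with a right-c.e.\ metric --- so strengthening the conclusion of Theorem~\ref{theo:no_comp_Polish} from ``computable topological'' to ``right-c.e.\ metrized'' requires no additional argument.
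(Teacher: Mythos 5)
The first half of your argument --- Feiner's algebra $B$ together with the (a)$\Rightarrow$(b) direction of Theorem~\ref{theo:Boolean} produces a right-c.e.\ metrized (indeed effectively compact) presentation of $\widehat{B}$ --- is exactly what the paper does, and that part is fine; the corollary is indeed just the observation that the witness for Theorem~\ref{theo:no_comp_Polish} is already right-c.e.\ metrized.

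The gap is in the non-homeomorphism half. You argue that if $\widehat{B}$ were homeomorphic to a computably metrized space, then it would be homeomorphic to an effectively compact computably metrized one, and the (b)$\Rightarrow$(a) direction of Theorem~\ref{theo:Boolean} would then ``extract a computable presentation of $B$.'' It would not: that direction only recovers a \emph{c.e.}\ presentation of $B$ from a right-c.e.\ effectively compact presentation of $\widehat{B}$ (its proof builds a $\Pi^0_2$-presentation and applies Odintsov--Selivanov), and Feiner's algebra \emph{does} have a c.e.\ presentation, so no contradiction arises. One could try to prove a stronger ``fully computable'' version of (b)$\Rightarrow$(a), but that is not what Theorem~\ref{theo:Boolean} gives you. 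The step you actually need is Proposition~\ref{prop:HTMN} (Theorem~1.1 of~\cite{HTMN-ta}): $\widehat{B}$ is homeomorphic to a computable Polish space if and only if $B$ has a computable copy. That is the result the paper invokes, it makes the detour through effective compactness of the hypothetical computable metrization unnecessary, and with that substitution your argument coincides with the paper's.
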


We leave open whether every computable topological compact Polish space is homeomorphic to a right-c.e.\ metrized one, but we of course conjecture that there should exist a counterexample.

\

Our second main result uses Stone spaces to test a new notion of computable categoricity for separable spaces.
Before we state the result, we remind the reader that
Pour El and Richards~\cite{PourElRich} were the first to investigate different computable separable spaces which are isometric but not computably isometric. The much more recent work \cite{Ilja} used isometric computable structures as a tool. 
Beginning with \cite{MelIso}, there has been a line of investigation focused on computable Polish spaces having a unique computable metrization  up to computable isometric isomorphism; 
such spaces are called \emph{computably categorical} (w.r.t.~isometries). For more results on computably categorical spaces,
see \cite{MelNg,McNellp,PhDBr,PhDMel}. 
It is often more natural to consider Polish spaces and especially Polish groups up to homeomorphism (resp., algebraic homeomorphism).
The recent paper \cite{pontr} introduces the notion of computable categoricity for profinite groups under computable algebraic homeomorphism.
The study of computably metrized spaces up to (not necessarily computable) homeomorphism was pioneered by  \cite{HKSel-arxiv,  HTMN-ta}.
 Nonetheless, computable categoricity for Polish spaces under \emph{computable} homeomorphism has not yet been studied in the literature. 
Our second main result classifies  Stone spaces which have a unique effectively compact presentation  up to \emph{computable} homeomorphism. More specifically, we prove:

\begin{theorem}\label{theo:comp-cat}
	Let $\mathcal{B}$ be a computable Boolean algebra. Then the following conditions are equivalent:
	\begin{itemize}
		\item[(a)] $\mathcal{B}$ is computably categorical.
		
		\item[(b)] The Stone space $\widehat{\mathcal{B}}$ is effectively categorical.
	\end{itemize}
\end{theorem}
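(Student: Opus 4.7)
The plan is to use Theorem~\ref{theo:Boolean} as a bridge: it sets up an effective Stone duality between c.e.\ (resp.\ computable) Boolean algebras and right-c.e.\ metrized (resp.\ effectively compact) Stone spaces. Concretely, from any effectively compact presentation of a Stone space $X$ one uniformly extracts a computable copy of the clopen algebra $\mathrm{Clop}(X)$ generated by the basic balls, and from any computable Boolean algebra $\mathcal{B}$ one builds an effectively compact presentation of $\widehat{\mathcal{B}}$ whose basic clopens are naturally indexed by the elements of $\mathcal{B}$. Given this dictionary, the proof becomes a matter of transferring isomorphisms and homeomorphisms back and forth.

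For (a) $\Rightarrow$ (b): given two effectively compact presentations $X_1, X_2$ of $\widehat{\mathcal{B}}$, I would first extract computable Boolean algebras $\mathcal{B}_1, \mathcal{B}_2$ of basic clopen sets, both isomorphic to $\mathcal{B}$. Computable categoricity of $\mathcal{B}$ then yields a computable isomorphism $\varphi\colon \mathcal{B}_1 \to \mathcal{B}_2$, which we lift to the map $\Phi\colon X_1 \to X_2$ sending an ultrafilter $U$ to $\varphi[U]$. To check that $\Phi$ is a computable homeomorphism, given a name for $x \in X_1$ and a basic clopen $C$ of $X_2$, one pulls back $C$ and its complement through $\varphi^{-1}$ and waits until effective compactness of $X_1$ forces $x$ into one of the two basic clopens; the answer certifies whether $\Phi(x) \in C$.

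For (b) $\Rightarrow$ (a): given two computable copies $\mathcal{B}_1, \mathcal{B}_2$ of $\mathcal{B}$, their Stone duals $\widehat{\mathcal{B}_1}, \widehat{\mathcal{B}_2}$ are effectively compact presentations of the same Stone space by Theorem~\ref{theo:Boolean}, so effective categoricity provides a computable homeomorphism $\Phi\colon \widehat{\mathcal{B}_1} \to \widehat{\mathcal{B}_2}$. The pullback $C \mapsto \Phi^{-1}[C]$ yields an isomorphism $\mathcal{B}_2 \to \mathcal{B}_1$ at the level of clopen algebras, and its computability uses effective compactness of $\widehat{\mathcal{B}_1}$ together with computability of $\Phi$ to express each preimage $\Phi^{-1}[C]$ as a finite union of basic clopens.

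The main obstacle in both directions is the passage between computable maps of Boolean algebras and computable maps of their Stone duals. Going up, a point is given only by nested basic approximations, and we must decide membership of its image in a basic clopen without knowing the point exactly; going down, the preimage of a basic clopen must be computably recognized as a finite combination of basic clopens. In each case effective compactness is doing the essential work by certifying when finitely many basic witnesses suffice, and the novelty of the theorem is largely in isolating a notion of effective categoricity (for effectively compact spaces) under which this bookkeeping runs cleanly in both directions.
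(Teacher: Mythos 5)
Your (b)$\Rightarrow$(a) direction is essentially the paper's argument: build effectively compact tree presentations $M_{\mathcal{A}}, M_{\mathcal{C}}$ from the two computable copies, take the effectively continuous homeomorphism, and use effective compactness to truncate the enumerated preimage of each clopen set to a finite subcover that can then be matched against a finitary name in the other copy. Your (a)$\Rightarrow$(b) direction, however, takes a genuinely different route. The paper never transfers the isomorphism of Boolean algebras: it invokes the Goncharov--Dzgoev/Remmel characterisation to conclude that $\mathcal{B}$ has only finitely many atoms, and then shows, via a computable finitely branching tree of formally disjoint clopen splittings (Lemma~\ref{lem:aux-tree-splitting}), that \emph{every} effectively compact presentation of such a space is computably homeomorphic to one fixed standard presentation ($2^\omega$, possibly with finitely many isolated paths adjoined). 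Your route --- extract computable copies $\mathcal{B}_1,\mathcal{B}_2$ of the clopen algebras of $X_1,X_2$, apply computable categoricity to get $\varphi$, and lift $\varphi$ to the ultrafilter spaces --- treats categoricity as a black box and avoids the atom case analysis; the paper's route buys the canonical model and the isolated-points corollary directly.

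Two steps in your sketch need more than you give them. First, Theorem~\ref{theo:Boolean} as stated only provides the c.e.\,/\,right-c.e.\ level of the duality; to extract a genuinely \emph{computable} copy of $\mathrm{Clop}(X_i)$ you must argue separately that for a computably metrized, effectively compact Stone space the clopen sets can be c.e.-listed together with formally disjoint finitary names of their complements, and that emptiness (hence equality) is \emph{decidable}: nonemptiness is $\Sigma^0_1$ via a special point, and emptiness is $\Sigma^0_1$ because the complement's finitary name covering the whole space is certified by effective compactness. Second, effective continuity of $\Phi$ requires an enumeration operator producing the \emph{metric} name of $\Phi(x)$, i.e.\ all basic open balls of $X_2$ containing it, not merely a decision procedure for membership in basic clopens; you must pass from ``$\Phi(x)\in c$ for arbitrarily fine clopens $c$'' to ``$\Phi(x)\in B(\beta,q)$'' by searching for a clopen $c\ni \Phi(x)$ whose finitary name is formally included in $B(\beta,q)$, and verify that such a witness always exists by compactness. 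Both gaps are fillable with the techniques already present in the paper (the formal inclusion/disjointness bookkeeping of Lemma~\ref{lem:aux-tree-splitting}), so your alternative proof of (a)$\Rightarrow$(b) does go through once these are supplied.
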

\noindent Here a Polish space is effectively categorical if any pair of effectively compact presentations of the space is computably homeomorphic; we will elaborate these terms in the preliminaries. This result, even though it is not particularly difficult to prove, initiates the study of computable categoricity for spaces considered up to homeomorphism rather than up to isometry. The result also indicates that the notion of effective compactness is likely the ``right'' notion of computable presentability for Stone spaces when they are considered up to homeomorphism. 

Computable categoricity  of effectively compact Polish spaces and Polish groups up to computable homeomorphism is a wide open area. One naturally seeks to give purely topological characterisations of categoricity in natural classes of compact Polish spaces in the spirit of the following corollary:

\begin{corollary} A Stone space $\widehat{\mathcal{B}}$ is effectively categorical if, and only if, it has only finitely many isolated points.
	
\end{corollary}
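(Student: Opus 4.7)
The plan is to derive the corollary by combining Theorem~\ref{theo:comp-cat} with a classical characterization of computable categoricity for countable Boolean algebras. By Theorem~\ref{theo:comp-cat}, a Stone space $\widehat{\mathcal{B}}$ arising from a computable Boolean algebra $\mathcal{B}$ is effectively categorical if and only if $\mathcal{B}$ is computably categorical. So the corollary reduces to the purely algebraic statement: a computable Boolean algebra $\mathcal{B}$ is computably categorical if and only if it has finitely many atoms.

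This algebraic statement is a well-known theorem of Goncharov and La~Roche from the late 1970s, which I would cite directly. To connect it with the topological formulation I would invoke the standard Stone-duality fact that the isolated points of $\widehat{\mathcal{B}}$ are exactly the principal ultrafilters, which are in bijection with the atoms of $\mathcal{B}$ (an ultrafilter $U$ is isolated in the Stone topology iff the singleton $\{U\}$ is basic open, iff some $b\in\mathcal{B}$ has $U$ as its unique extension, iff $b$ is an atom). So ``finitely many isolated points of $\widehat{\mathcal{B}}$'' translates to ``finitely many atoms of $\mathcal{B}$,'' and the equivalence follows.

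For completeness one could briefly sketch why finitely many atoms suffices: if $\mathcal{B}$ has finitely many atoms, then modulo those atoms the algebra is atomless (or finite), and atomless Boolean algebras are computably categorical by a back-and-forth argument, which extends across the finite atomic part. Conversely, if $\mathcal{B}$ has infinitely many atoms, one builds a second computable copy $\mathcal{B}'$ such that the set of atoms is not computable (or one diagonalizes against all partial computable isomorphisms using the classical $\emptyset'$-coding into the atom structure), yielding non-categoricity.

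The only ``obstacle'' is really bookkeeping: one must be careful that the computable isomorphism of Boolean algebras provided by the Goncharov--La~Roche theorem dualizes to a \emph{computable} homeomorphism of Stone spaces in the effectively compact setting, but this is precisely what the effective Stone duality underlying Theorem~\ref{theo:comp-cat} guarantees, so no further work is required.
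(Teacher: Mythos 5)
Your proposal is correct and follows essentially the same route as the paper, which derives the corollary immediately from Theorem~\ref{theo:comp-cat} together with the classical characterisation of computably categorical Boolean algebras as those with finitely many atoms (the paper cites Goncharov--Dzgoev and Remmel for this, rather than La~Roche), plus the standard duality between atoms and isolated points. No further comment is needed.
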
	 
\noindent The corollary of course follows immediately from Theorem~\ref{theo:comp-cat} and the well-known characterisation of computably categorical Boolean algebras~\cite{Goncharov-Dzgoev-80,Remmel81-BA}.
 Also, it would be interesting to see if there is a syntactical characterisation of relative categoricity (which needs to be defined formally) in the spirit of \cite{AshKn}. We leave these problems open.

\

Our third main result  applies Stone spaces to prove a theorem about computable Banach spaces. 
Let $X,Y$ be compact Polish spaces, and let $C(X; \mathbb{R})$ denote the Banach space of continuous functions $X \rightarrow \mathbb{R}$ under the supremum metric and pointwise operations.
Recall that the Banach–Stone theorem states that Banach spaces $C(X;\mathbb{R})$ and $C(Y;\mathbb{R})$ are isometrically isomorphic if, and only if,  $X$ and $Y$ are homeomorphic. 
This means that the (linear) isometry type of $C(X;\mathbb{R})$ is determined by the homeomorphism type of $X$, and vice versa. 
In personal communication with the third author, McNicholl has recently raised the question of whether this fact holds computably in the following sense. He observed that, for a computably metrized compact Polish space $X$,  $C(X;\mathbb{R})$ admits a computable Banach space presentation (all these notions will be formally defined later). Does the converse hold? More specifically:

\begin{center}
Does the computable presentability of $C(X;\mathbb{R})$ imply that $X$ is homeomorphic to a computable Polish space?
\end{center}

\noindent Although we suspect that the question above can likely be answered in negative by constructing a counterexample, we prove the following \emph{positive} result for totally disconnected spaces:

\begin{theorem}\label{thm1}
	Let $X$ be a separable Stone space and let $C(X;\mathbb{R})$ be the  Banach space of continuous functions $X \to \mathbb{R}$. Then the following are equivalent:
	
	\begin{enumerate}
\item  $C(X;\mathbb{R})$ has a presentation as a computable Banach space;

\item $X$ is computably metrizable.

\end{enumerate}	
\end{theorem}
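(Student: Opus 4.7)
The implication $(2) \Rightarrow (1)$ is McNicholl's observation: if $X$ is a computably metrized Stone space, it is effectively compact (as noted in the introduction for Stone spaces), and one obtains a computable Banach space presentation of $C(X;\mathbb{R})$ by taking rational step functions over a computable enumeration of basic clopens as the dense computable sequence, with the sup-norm computable via the effective compactness.

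For the main direction $(1) \Rightarrow (2)$, the plan is to extract from the given computable Banach space $V = C(X;\mathbb{R})$ a computable presentation of the dual Boolean algebra $\mathcal{B}(X)$ of clopens of $X$; then the constructive half of computable Stone duality (implicit in the easy direction of Theorem~\ref{theo:Boolean}) converts this into a computable metrization of $\widehat{\mathcal{B}(X)} \cong X$. The algebra $\mathcal{B}(X)$ sits inside $V$ as the set of idempotents $\{1_A : A \in \mathcal{B}(X)\}$, a discrete subset at pairwise $\|\cdot\|_\infty$-distance exactly $1$. This distance-$1$ separation is the key geometric feature: any element of $V$ is within distance $1/3$ of at most one idempotent, so the task reduces to effectively enumerating, to arbitrary precision, those basic elements of the dense computable sequence that lie close to an idempotent. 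Once the idempotents are enumerated, the Boolean operations are recoverable from Banach-space data alone: equality is decidable by distance computation (difference of norm $<1/2$ means equal; otherwise the difference is $1$), the partial order $1_A \le 1_B$ holds iff $1_B - 1_A$ is again an enumerated idempotent, the top element $\mathbf{1}$ emerges as the maximum element of the partial order, complementation is $\neg 1_A = \mathbf{1} - 1_A$, and the lattice operations follow.

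The principal obstacle is the effective recognition of idempotency from Banach-space data alone, since the algebraic characterization $f^2 = f$ is unavailable without the primitive multiplicative structure. The purely Banach-space characterization we use is that the idempotents of $C(X;\mathbb{R})$ are exactly the extreme points of the order interval $[0,\mathbf{1}] = \{f \in V : \|f\| \le 1 \text{ and } \|\mathbf{1} - f\| \le 1\}$, which in turn presupposes identifying $\mathbf{1} \in V$. We pin down $\mathbf{1}$ via the Banach--Stone characterization of the multiplicative identity as the unique positive extreme point of $B_V$ of norm one, which in the Stone setting (with its dense supply of step functions and the well-separated extreme points of $B_V$) admits a $\Sigma^0_1$-verifiable quantitative form against the given dense computable sequence. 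Combined with the distance-$1$ separation of idempotents, this yields the effective enumeration of $\mathcal{B}(X)$ and completes the proof.
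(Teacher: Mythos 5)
Your direction $(2)\Rightarrow(1)$ and the overall strategy for $(1)\Rightarrow(2)$ (recover the clopen algebra from the idempotents of $C(X;\mathbb{R})$, viewed as a $1$-separated subset, then apply effective Stone duality) match the paper. But there is a genuine gap in the effectivity claim at the heart of $(1)\Rightarrow(2)$: recognizing that a point of the dense sequence is close to an idempotent is \emph{not} a $\Sigma^0_1$ condition in the Banach-space data, and your proposal stands or falls on that claim. The obstruction is concrete: a proper idempotent $1_A$ and a continuous $h$ with range all of $[0,1]$ have identical norms and identical distances to every constant function, so no finite amount of metric information of that kind separates them; what distinguishes them is that $h$ admits a small perturbation $q$ with $\|h\pm q\|\leq 1$ and $\|h\pm q-\mathbf{1}\|\leq 1$ while $1_A$ does not. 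That is, \emph{non}-extremality in $[0,\mathbf{1}]$ is witnessed by finding $q$ in the dense set and is therefore $\Sigma^0_1$; extremality (equivalently, being an approximate idempotent) is $\Pi^0_1$. This is exactly why the paper's notion of a $2$-partition carries the universally quantified clause (4) and is only $\Pi^0_1$, why ``splits'' is introduced as the $\Sigma^0_1$ side of the atom relation, and why the construction is run with a $\mathbf{0}'$ oracle. Consequently your method can only produce a $\Delta^0_2$ copy of $\mathcal{B}(X)$, and you omit the step that makes the argument close: the Downey--Jockusch theorem that a $\Delta^0_2$ Boolean algebra with $\Delta^0_2$ atom relation has a computable copy (a $\Delta^0_2$-presented Boolean algebra alone need not, by Feiner-type examples, and this is also why the atom relation must be controlled at all). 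Without downgrading your effectivity claim to $\Delta^0_2$ and adding that theorem, the proof does not go through.

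A secondary, repairable error: $\mathbf{1}$ is \emph{not} intrinsically characterizable as ``the unique positive extreme point of $B_V$ of norm one.'' Positivity is order-theoretic data not present in a Banach-space presentation, and every $u_A=1_A-1_{A^c}$ is an extreme point of $B_V$; multiplication by $u_A$ is an isometric linear automorphism carrying $\mathbf{1}$ to $u_A$, so no isometric invariant singles out $\mathbf{1}$. This does not matter for the theorem --- the paper simply fixes an approximation $p_1$ to $\mathbf{1}$ non-uniformly (and any choice of unit yields an isomorphic clopen algebra) --- but your stated justification for locating $\mathbf{1}$ is incorrect as written.
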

We emphasise that in (1) we consider $C(X; \mathbb{R})$ up to isometric linear isomorphism, but in (2) we view $X$ up to homeomorphism.
It was recently proven in \cite{HTMN-ta} that (2) of Theorem~\ref{thm1}  is equivalent to computable presentability of  the Boolean algebra which is dual to $X$.
Thus, our theorem  combined with the main result of \cite{knight2000} and the aforementioned result from \cite{HTMN-ta},  gives the following peculiar consequence.
	
\begin{corollary} Suppose  $C(X;\mathbb{R})$ has a $low_4$ Banach space presentation. If  $X$ is a Stone space, then  $C(X;\mathbb{R})$ is isometrically isomorphic to a  computable Banach space.
	
\end{corollary}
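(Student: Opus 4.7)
The plan is to chain together three results: a relativized version of Theorem~\ref{thm1}, a relativized version of the Stone-space characterization from \cite{HTMN-ta} (a Stone space is computably metrizable if and only if its dual Boolean algebra is computably presentable), and the Knight--Stob theorem of \cite{knight2000}, which asserts that every $low_4$ Boolean algebra admits a computable presentation. The whole idea is to funnel the effective content of the Banach space through the Boolean algebra dual to $X$ and then let Knight--Stob do the real work.

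Concretely, I would fix an oracle $D$ with $D^{(4)} \equiv_T \emptyset^{(4)}$ witnessing that $C(X;\mathbb{R})$ has a $low_4$ Banach space presentation. Relativizing Theorem~\ref{thm1} to $D$ gives that $X$ is $D$-computably metrizable. Next, relativizing the main theorem of \cite{HTMN-ta} to $D$ gives that the dual Boolean algebra $\mathcal{B}$ of $X$ has a $D$-computable presentation; hence $\mathcal{B}$ is itself a $low_4$ Boolean algebra. Invoking Knight--Stob then produces a computable presentation of $\mathcal{B}$. Running the chain of equivalences in reverse, the unrelativized \cite{HTMN-ta} result yields a computable metrization of $X$, and the direction $(2) \Rightarrow (1)$ of Theorem~\ref{thm1} then produces a computable Banach space presentation of $C(X;\mathbb{R})$.

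The main obstacle, and essentially the only thing to verify, is that the proofs of Theorem~\ref{thm1} and of the \cite{HTMN-ta} characterization are uniform enough to relativize cleanly to an arbitrary oracle $D$. Both statements are equivalences between purely structural notions (Banach spaces, compact metric spaces, Boolean algebras), and, by the ``approximate definability'' methodology emphasized in the introduction, the arguments proceed by manipulating c.e.~approximations and effectively compact covers with no noneffective input. Substituting ``$D$-c.e.'' for ``c.e.'' uniformly throughout should therefore yield the $D$-relativized form of each result, at which point the corollary is immediate.
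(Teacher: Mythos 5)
Your proposal is correct and is exactly the argument the paper intends: relativize Theorem~\ref{thm1} and the duality result of \cite{HTMN-ta} to the $low_4$ oracle to obtain a $low_4$ presentation of the dual Boolean algebra, apply the Knight--Stob theorem of \cite{knight2000} to get a computable copy, and then run the unrelativized equivalences backwards. The relativizability you flag as the only point to check is indeed unproblematic, since the construction in the proof of $(1)\Rightarrow(3)$ uses only c.e.\ approximations relative to the given presentation.
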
	

We leave open whether every low Banach space of the form $C(X;\mathbb{R})$, where $X$ is compact, has a computable presentation. This is, of course, closely related to the question of McNicholl that we stated above, and which we will also leave open for spaces which are not totally disconnected.

\section{Preliminaries}
\subsection{Effective metrizations of Polish spaces}
A Polish space $(M,d)$ is \emph{right-c.e.~presented}  or \emph{admits a right-c.e.~metric}  if there exists a sequence $(\alpha_i)_{i\in\omega}$ of $M$-points which is dense in $M$ and such that for every $i,j\in\omega$, the distance $d(\alpha_i, \alpha_j)$ is a right-c.e.~real, uniformly in $i$ and $j$. Formally, there is a c.e. set $W \subseteq \omega^{2}\times \mathbb{Q}$ such that for any $i$ and $j$,
\[
	\{ q\in \mathbb{Q} : d(\alpha_i, \alpha_j) < q\} = \{ q : (i,j,q)\in W\}.
\]
Note that the sequence $(\alpha_i)_{i \in \omega}$ may contain repetitions; equivalently, it is possible that $d(\alpha_i, \alpha_j) = 0$ for some $i,j$.

The definition of a left-c.e.~Polish~space is obtained from the notion of a right-c.e.~Polish space using the notion of a left-c.e.~real, mutatis mutandis. A Polish space is \emph{computably presented} or, perhaps more descriptively, \emph{computably metrizable} if there is a metric 
on the space which is both right-c.e.~and left-c.e.

Note that, strictly speaking, a computable or a right-c.e.~metrization of a space is a countable object $(\alpha_i)_{i\in\omega}$, but
we will usually identify a computable or a right-c.e.~metrization $(\alpha_i)_{i\in\omega}$ of space $M$  with its completion $\overline{(\alpha_i)_{i\in\omega}}$. We will also denote computable presentations by letters $X, Y, Z,\ldots$. The exact choice of notation for the dense set is not important.

\begin{remark}
Note that we intentionally did not emphasise whether we consider Polish spaces up to isometric isomorphism or under some other notion of similarity, such as, e.g., quasi-isometry or homeomorphism. Indeed, these will lead to non-equivalent notions. For example, for a real $\xi$, the space $[0, \xi]$ is isometrically isomorphic to a computably metrized space if, and only if $\xi$ is left-c.e. However, for any real $\xi$ this space is homeomorphic to the unit interval $[0,1]$ which is of course computably metrizable.

 Traditionally, Polish spaces in computable analysis have been viewed under isometric isomorphism; see, e.g., \cite{PourElRich}. In this paper we usually consider Polish spaces under homeomorphism, that is, a Polish space has a right-c.e.~presentation if it is \emph{homeomorphic} to the completion of a right-c.e.~metrized space. Nonetheless, we will emphasise this in most of the theorems and lemmas that we prove to make sure that there is no conflict of terminology. \end{remark}

 \subsection{Computable topological spaces}
 
\begin{definition}[see, e.g., Definition~4 of~\cite{comptop}]
	A \emph{computable topological space} is a tuple $(X,\tau,\beta,\nu)$ such that
	\begin{itemize}
		\item $(X,\tau)$ is a topological $T_0$-space,
		
		\item $\beta$ is a base of $\tau$,
		
		\item $\nu\colon \omega \to \beta$ is a surjective map, and
		
		\item there exists a c.e. set $W$ such that for any $i,j\in\omega$,
		\[
			\nu(i) \cap \nu(j) = \bigcup \{ \nu(k)\,\colon (i,j,k) \in W\}.
		\]
	\end{itemize}
\end{definition}

Let $(X,\tau,\beta,\nu)$ be a computable topological space. For $i\in\omega$, by $B_i$ we denote the open set $\nu(i)$. As usual, we identify basic open sets $B_i$ and their $\nu$-indices. 

\begin{definition}
A computable topological space $(X,\tau,\beta,\nu)$ is \emph{effectively compact} if it is equipped with an effective enumeration 
\[
	\{  \vec{D}^i = (D^i_0, D^i_1, \dots, D^i_{k_i}) \}_{i\in\omega}
\]
of all tuples of basic open sets such that $X = \bigcup_{j \leq k_i} D^i_{j}$. 
\end{definition}

The usual examples of computable topological spaces include computably metrized Polish spaces and right-c.e.~metrized Polish spaces. The latter is well-known; nonetheless, we decided to include a complete proof of this fact.

\begin{proposition}\label{prop:right-ce}
	Every right-c.e.\ Polish space is a computable topological space.
\end{proposition}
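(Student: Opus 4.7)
The plan is to take as the base $\beta$ the collection of open balls $B(\alpha_i,q) = \{y \in M : d(\alpha_i,y) < q\}$ for $i \in \omega$ and positive rational $q$, with $\nu \colon \omega \to \beta$ induced by any computable bijection $\omega \to \omega \times \mathbb{Q}^{+}$. Since $(\alpha_i)_{i\in\omega}$ is dense and $M$ is a metric space (hence $T_0$), $\beta$ is a base for the metric topology. It remains to exhibit a c.e.~set $W$ witnessing that basic intersections are uniformly c.e.~unions of basic sets.

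I would define $W$ as follows. Writing $\nu(i) = B(\alpha_{i'},q)$, $\nu(j) = B(\alpha_{j'},s)$, $\nu(k) = B(\alpha_{k'},r)$, enumerate $(i,j,k)$ into $W$ as soon as rationals $p_1,p_2$ appear in the given right-c.e.~enumeration of the metric witnessing $d(\alpha_{i'},\alpha_{k'}) < p_1$ and $d(\alpha_{j'},\alpha_{k'}) < p_2$, together with $p_1 + r < q$ and $p_2 + r < s$. The right-c.e.~assumption makes this a c.e.~procedure, and the search uses exactly upper bounds on distances, which is what right-c.e.~metrics provide.

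For the $\supseteq$ direction, if $(i,j,k) \in W$ with witnesses $p_1,p_2$ as above and $y \in B(\alpha_{k'},r)$, then
\[
d(\alpha_{i'},y) \le d(\alpha_{i'},\alpha_{k'}) + d(\alpha_{k'},y) < p_1 + r < q,
\]
and similarly $d(\alpha_{j'},y) < s$, so $y \in \nu(i) \cap \nu(j)$. For the $\subseteq$ direction, fix $x \in \nu(i) \cap \nu(j)$ and let $\delta = \min(q - d(\alpha_{i'},x),\ s - d(\alpha_{j'},x)) > 0$. By density, pick $k' \in \omega$ with $d(\alpha_{k'},x) < \delta/4$, and a rational $r$ with $d(\alpha_{k'},x) < r < \delta/3$; then $x \in B(\alpha_{k'},r)$, and the triangle inequality gives $d(\alpha_{i'},\alpha_{k'}) < q - r$ and $d(\alpha_{j'},\alpha_{k'}) < s - r$ with strict slack, so rationals $p_1,p_2$ verifying the condition for $W$ exist and will eventually be enumerated. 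Thus $x$ lies in some $\nu(k)$ with $(i,j,k) \in W$.

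I do not anticipate any serious obstacle; the only delicate point is maintaining strict inequalities, which is precisely why the right-c.e.~direction (upper enumerability of distances) is needed rather than left-c.e.~(a left-c.e.~metric would only let us confirm that balls are \emph{not} contained in one another, not that they are). The slack $\delta/3$ versus $\delta/4$ in the density step is the usual cushion to pass from a real distance bound to a rational one that is provably witnessed by the enumeration.
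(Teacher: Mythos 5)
Your proposal is correct and follows essentially the same route as the paper: the same base of rational-radius balls around special points, the same c.e.\ membership condition (your $p_1+r<q$, $p_2+r<s$ is exactly the paper's $d(\alpha_i,\alpha_k)<q-t$ and $d(\alpha_j,\alpha_k)<r-t$ witnessed by the right-c.e.\ enumeration), and the same triangle-inequality verification in both directions. The only cosmetic difference is that you locate the witnessing ball directly by density with explicit $\delta/3$ versus $\delta/4$ slack, where the paper appeals to openness of the intersection; both are sound.
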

 
\begin{proof} 
Let $(M,d)$ be a right-c.e.~Polish space, and let $(\alpha_i)_{i\in\omega}$ be its sequence of special points. By $\tau$ we denote the metric topology of $(M,d)$. As usual, the base $\beta$ of $\tau$ contains basic open balls
\[
	B(\alpha_i,q) = \{ x\in M : d(\alpha_i,x) < q\},\ \ \  i\in\omega,\ q\in\mathbb{Q}^{+}.
\]
For $i\in\omega$ and $q\in\mathbb{Q}^{+}$, we put
$\nu(i,q) = B(\alpha_i,q)$. 

We prove that the tuple $(M,\tau,\beta,\nu)$ is a computable topological space. It is sufficient to establish the following: for any $i,j\in\omega$ and $q,r\in\mathbb{Q}^{+}$, one can (uniformly) effectively enumerate a set $X \subseteq \omega \times \mathbb{Q}^{+}$ such that 
\begin{equation}\label{equ:balls-01}
	 B(\alpha_i,q) \cap B(\alpha_j, r) = \bigcup \{B(\alpha_k, t) \,\colon (k,t) \in X \}.
\end{equation}

Our set $X$ is defined as follows: $X$ contains all pairs $(k,t)$ such that
\[
 	d(\alpha_i,\alpha_k) < q - t \text{ and } d(\alpha_j,\alpha_k) < r-t.
\]
Since the space $(M,d)$ is right-c.e., it is not hard to see that the set $X$ is c.e., uniformly in $i,j,q,r$.
If $(k,t)\in X$, then by using the triangle inequality, one can easily show that $B(\alpha_k,t)$ is a subset of $B(\alpha_i,q) \cap B(\alpha_j,r)$. 

Let $x$ be an arbitrary point from $U := B(\alpha_i,q) \cap B(\alpha_j, r)$. Choose positive rationals $\epsilon$ and $\delta$ such that $\epsilon < q - d(\alpha_i, x)$ and $\delta < r - d(\alpha_j, x)$. Since $U$ is open, one can find $k\in\omega$ and $t \in\mathbb{Q}^+$ such that $x\in B(\alpha_k, t) \subseteq U$ and $t < \min(\epsilon/2, \delta/2)$. Then we have
\[
	d(\alpha_i,\alpha_k) \leq d(\alpha_i,x) + d(\alpha_k,x) < (q - \epsilon)  + t < q - \epsilon/2 < q - t.
\]
Therefore, $(k,t)$ belongs to $X$, and the set $X$ satisfies~(\ref{equ:balls-01}). Hence, $(M,\tau,\beta,\nu)$ is a computable topological space. 
\end{proof}

\

\subsection{Computable Banach spaces} Let $X$ be a computable topological space.
For a point $x\in X$, its \emph{name} is the set
\[
	N^x = \{ i\in\omega\,\colon x\in B_i\}.
\]
An \emph{open name} of an open set $U \subseteq X$ is a set $W\subseteq \omega$ such that 
\[
	U = \bigcup_{i\in W} B_i.
\]

\begin{definition}
	Let $X$ and $Y$ be computable topological spaces. A function $f\colon X \to Y$ is \emph{effectively continuous} if there is a c.e.\ family $F\subseteq \mathcal{P}(X) \times \mathcal{P}(Y)$ of pairs of (indices of) basic open sets such that:
	\begin{itemize}
		\item[(C1)] for every $(U,V) \in F$, we have $f(U) \subseteq V$;
		
		\item[(C2)] for every point $x \in X$ and every basic open $E\ni f(x)$ in $Y$, there exists a basic open $D\ni x$ in $X$ with $(D,E) \in F$.
	\end{itemize}
\end{definition}

The elementary fact below is well-known; see, e.g.,  Lemma~2.7 of~\cite{MM-groups-18}.

\begin{lemma} \label{lem:effective-continuous}
	Let $f\colon X\to Y$ be a function between computable topological spaces. Then the following conditions are equivalent:
	\begin{enumerate}
		\item $f$ is effectively continuous.
		
		\item There is an enumeration operator $\Phi$ that on input an open name of an open set $V$ in $Y$ lists an open name of the set $f^{-1}(V)$ in $X$.
		
		\item There is an enumeration operator $\Psi$ that given the name of a point $x\in X$, enumerates the name of $f(x) \in Y$.
	\end{enumerate}
\end{lemma}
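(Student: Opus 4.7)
The plan is to prove the equivalence cyclically, $(1) \Rightarrow (2) \Rightarrow (3) \Rightarrow (1)$, using the c.e.\ base structure on $X$ and $Y$ as the main computable ingredient in each direction.

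For $(1) \Rightarrow (2)$, given a c.e.\ family $F$ witnessing effective continuity and an open name $W$ of $V = \bigcup_{i \in W} B_i$ in $Y$, I would define $\Phi(W)$ to be the set of all indices $k$ of basic opens in $X$ such that $(B_k, B_i) \in F$ for some $i \in W$. Property (C1) gives containment of $\bigcup_{k \in \Phi(W)} B_k$ in $f^{-1}(V)$, while (C2) applied pointwise gives the reverse inclusion. Uniformity of $\Phi$ in $W$ is immediate because $F$ is c.e. For $(2) \Rightarrow (3)$, given the name $N^x$ of $x$, observe that $f(x) \in B_j$ iff $x \in f^{-1}(B_j)$ iff some index $k$ in the open name of $f^{-1}(B_j)$ produced by $\Phi$ from the trivial name $\{j\}$ also lies in $N^x$; this is a c.e.\ condition uniformly in $j$ and in the enumeration of $N^x$, yielding the desired enumeration operator $\Psi$.

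The main work is in $(3) \Rightarrow (1)$. Let $\Psi$ be given by a c.e.\ set $W \subseteq \omega^{<\omega} \times \omega$ of axioms $(\sigma, e)$, so that $e \in \Psi(S)$ iff some finite $\sigma \subseteq S$ satisfies $(\sigma, e) \in W$. I would enumerate $F$ as follows: put $(D, E)$ into $F$ whenever there is an axiom $(\sigma, E) \in W$ with $\sigma = \{i_1, \dots, i_n\}$, together with a c.e.\ witness that $B_D \subseteq B_{i_1} \cap \cdots \cap B_{i_n}$. Such witnesses are available uniformly by iterating the defining c.e.\ relation of the computable topological space $X$, which enumerates indices of basic opens contained in any finite intersection of basic opens. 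Then (C1) holds because for any $x \in B_D$ one has $\sigma \subseteq N^x$, hence $E \in \Psi(N^x) = N^{f(x)}$, i.e.\ $f(x) \in B_E$. For (C2), given $x \in X$ and a basic open $B_E \ni f(x)$, the hypothesis $E \in N^{f(x)} = \Psi(N^x)$ produces a finite $\sigma \subseteq N^x$ with $(\sigma, E) \in W$; since $x$ lies in the open set $\bigcap_{i \in \sigma} B_i$, the c.e.\ base structure supplies a basic open $B_D$ with $x \in B_D \subseteq \bigcap_{i \in \sigma} B_i$, which will be enumerated into $F$ paired with $E$.

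I expect the main (only mildly technical) obstacle to be in the last step: arranging that the c.e.\ witnesses ``$B_D \subseteq \bigcap_{i \in \sigma} B_i$'' are produced uniformly from $\sigma$, which requires iterating the binary intersection relation supplied by the definition of a computable topological space to handle arbitrary finite $\sigma$. Everything else is a bookkeeping exercise with enumeration operators and the $T_0$-base structure.
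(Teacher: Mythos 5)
Your proof is correct, and since the paper does not prove this lemma at all (it is cited as well-known, deferring to Lemma~2.7 of the reference \cite{MM-groups-18}), there is nothing in the paper for it to diverge from; the cyclic argument you give, with the only real content being the iteration of the binary intersection relation to witness $B_D \subseteq \bigcap_{i\in\sigma} B_i$ in the $(3)\Rightarrow(1)$ step, is exactly the standard one. The only point worth a sentence in a written-up version is the trivial edge case $\sigma = \emptyset$ in that step, where the "intersection" is all of $X$ and any basic open containing $x$ may be paired with $E$.
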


\ 

 The definition below is equivalent to the standard definition from \cite{PourElRich}.

\begin{definition}
A separable (real)  Banach space $\mathcal{B}$ is \emph{computably presented} if it is isometrically (linearly) isomorphic to a computably metrized Polish space in which the operations  $+$ and scalar multiplication $(r \cdot)_{r \in \mathbb{Q}}$ become uniformly computable with respect to the metric (more precisely, with respect to the computable topology induced by the metric).
\end{definition}

\

It is well-known that any self-isometry of a Banach space has to be affine, i.e., it can shift the origin $0$ but does respect the operation up to a translation, thus we do not really have to emphasise that the isometry in the definition has to be linear as long as it maps zero to zero. We however do emphasise that in this paper we view Banach spaces up to isometry and not up to homeomorphism.

\

\subsection{Effectively compact presentations}
For an open subset $U$ of a computable Polish space $(M,d)$, a \emph{finitary name} of $U$ is a sequence $\vec{C} = (C_0, C_1,\dots, C_k)$ of basic open balls such that $U = \bigcup_{i \leq k} C_i$. Note that any finitary name of $U$ (if it exists) is its open name.

\begin{definition}
A \emph{computable compact presentation} or an \emph{effectively compact presentation} of a Polish space $M$ is a computable metrization of $M$,
which is effectively compact.
\end{definition}

 We introduce the following notion of effective categoricity for effectively compact Polish spaces.

\begin{definition}
	We say that an effectively compact Polish space $M$ is \emph{effectively categorical} (or computably categorical with respect to effectively compact presentations) if  for any pair of effectively compact  $X$ and $Y$ \emph{homeomorphic} to $M$, there is an effectively continuous surjective homeomorphism from $X$ onto $Y$.
\end{definition}

Before proceeding to the new results, we observe the following useful fact:

\begin{remark}\label{remark:Brattka}
	Let $X$ and $Y$ be effectively compact presentations of $M$. By a result of Brattka (see Corollary~6.5 of~\cite{Brattka-08}), if $f$ is an effectively continuous surjective homeomorphism from $X$ onto $Y$, then its inverse $f^{-1}$ is an effectively continuous surjective homeomorphism from $Y$ onto $X$. See also Section~6.2 of~\cite{Ilj-Kihara} for a discussion.
\end{remark}

\begin{remark}
We will not develop the theory of computably compact (effectively compact) spaces any further.  Our treatment of computable Stone duality is self-contained, however, assuming various results from the two large recent surveys
\cite{Ilj-Kihara} and \cite{EffedSurvey} it can potentially be made more compact (no pun intended). This is explained in detail in  \cite{EffedSurvey}.  As suggested by the anonymous referee, our results on effective Stone duality can likely be extended to a more general setting using the theory of represented spaces; see \cite{Arara} for a comprehensive introduction. 
Furthermore, as was further noted by the referee, one could take a slightly more abstract approach to Stone duality following some category-theoretic ideas that can be extracted from \cite{TaylorStuff}. 
\end{remark}


\section{A computable topological space not homeomorphic to a computably metrized one}

Recall that a c.e.~presentation of a countably infinite  Boolean algebra is its isomorphic copy of the form $\mathcal{F}/ I$, where $\mathcal{F}$ is the countable atomless Boolean algebra and $I$ is its c.e.~ideal \footnote{Equivalently, it can be viewed as a pre-structure $(\omega,  \cup, \cap, \overline{\,\cdot\,},0,1, =)$ upon the domain of $\omega$ such that the operations $ \cup, \cap, \overline{\,\cdot\,}$ are computable, but the equality (the congruence) is merely computably enumerable. Note that the quotient of this structure by the c.e.~congruence can be finite. The notion of a c.e.-presented Boolean algebra will be extended in the proof below.}.

The plan of the proof of Theorem~\ref{theo:no_comp_Polish} is as follows. We will prove the new effective version of Stone duality stated below. 

\begin{theorem}\label{theo:Boolean}
	Let $B$ be an at most countable Boolean algebra, and let $\widehat{B}$ be the space of its ultrafilters. Then the following conditions are equivalent:
	\begin{itemize}
		\item[(a)] $B$ has a c.e.\ presentation,
		
		\item[(b)] $\widehat{B}$ admits a compatible, complete right-c.e.\ metric such that the induced computable topological space is effectively compact. 
	\end{itemize}
\end{theorem}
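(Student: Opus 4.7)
The plan is to mediate between the two conditions via the canonical Stone embedding of $\widehat{B}$ into Cantor space induced by an enumeration of the underlying Boolean algebra, so that $\widehat{B}$ is visible as a $\Pi^0_1$ class in $2^\omega$ exactly when $B$ is c.e.\ presented.

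For direction $(a) \Rightarrow (b)$: Start with a c.e.\ presentation $B = \mathcal{F}/I$, with $\mathcal{F}$ the computable atomless Boolean algebra enumerated as $\{e_n\}_{n\in\omega}$ and $I$ a c.e.\ ideal. For each $\sigma \in 2^{<\omega}$ let $m_\sigma \in \mathcal{F}$ be the Boolean meet, over $i < |\sigma|$, of $e_i$ when $\sigma(i) = 1$ and $\bar{e}_i$ when $\sigma(i) = 0$, and set $T = \{\sigma : m_\sigma \notin I\}$. Then $T$ is a co-c.e.\ subtree of $2^{<\omega}$, and Stone duality identifies $\widehat{B}$ with the $\Pi^0_1$ class $[T] \subseteq 2^\omega$. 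I would equip $\widehat{B}$ with the restriction of the Cantor ultrametric and, as dense sequence, for each $\sigma \in 2^{<\omega}$ the lex-least infinite extension $\alpha_\sigma \in [T]$ of $\sigma$ (defaulting to the lex-least path of $[T]$ when $\sigma \notin T$). Working with the shrinking computable approximation $T_s \supseteq T$, the stage-$s$ leftmost paths $L_s(\sigma)$ converge lex-monotonically to $\alpha_\sigma$, and the pairwise distances $d(\alpha_\sigma, \alpha_\tau)$ admit a c.e.\ enumeration of upper bounds, obtained by pairing common-prefix data from the $L_s$'s with c.e.\ dead-node certificates for $T$; König's lemma together with the compactness of $[T]$ ensures that such certificates pin down the true distance. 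Effective compactness comes for free: a finite tuple of cylinders $[\sigma_1], \dots, [\sigma_k]$ covers $[T]$ iff at some depth $N$ every node of $2^N$ either extends some $\sigma_i$ or is killed in $T_s$, which is c.e.

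For direction $(b) \Rightarrow (a)$: Conversely, given an effectively compact right-c.e.\ metric space $X$ homeomorphic to $\widehat{B}$, I would recover $B$ as the Boolean algebra of clopen subsets of $X$, c.e.\ enumerated via ``formal clopen names''. The device is to enumerate pairs $(\vec{U}, \vec{V})$ of finite tuples of basic balls for which one can c.e.\ certify both (i) $\bigcup \vec{U} \cup \bigcup \vec{V} = X$, directly from effective compactness, and (ii) that suitable interior finite subfamilies $\vec{U}_0 \subseteq \vec{U}$, $\vec{V}_0 \subseteq \vec{V}$ already jointly cover $X$ while the closure of each lies inside the open indicated by its own side. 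Conditions of the form $\overline{B(\alpha,r)} \subseteq B(\alpha',r')$ are c.e.\ from the right-c.e.\ metric via the triangle inequality, and compact-in-open inclusions are c.e.\ from effective compactness; the $0$-dimensionality of $X$ together with a Lebesgue-number argument guarantees that every clopen $C$ of $X$ is captured by such a pair at sufficiently small scale. Once the clopen algebra is c.e.\ enumerated, with c.e.\ equality and c.e.\ Boolean operations, a standard surjection $\mathcal{F} \twoheadrightarrow \mathrm{Clop}(X)$ sending generators to designated clopens exhibits the kernel as a c.e.\ ideal, giving the desired c.e.\ presentation of $B$.

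The main obstacle is constructing the right-c.e.\ metric in direction $(a) \Rightarrow (b)$: the raw distance estimates coming from the tree approximation $T_s$ are only $\Delta^0_2$-approximable in general, and the delicate point is identifying enough c.e.\ witnesses (from the combination of leftmost-path candidates $L_s(\sigma)$ and dead-node certificates) to force tight c.e.\ convergence to the true distance between the limit points $\alpha_\sigma$. A parallel difficulty in $(b) \Rightarrow (a)$ is that ``two open sets are disjoint'' is a priori $\Pi^0_1$ in a right-c.e.\ metric space, so the clopen enumeration must systematically trade $\Pi^0_1$ disjointness conditions for $\Sigma^0_1$ compact-in-open inclusions, leveraging both effective compactness and the Stone structure of $X$.
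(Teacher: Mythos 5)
Your high-level architecture matches the paper's (co-c.e.\ pruned trees and the Cantor ultrametric for (a)$\Rightarrow$(b); recovery of the clopen algebra for (b)$\Rightarrow$(a)), but both directions have a genuine gap exactly at the point you flag as "the main obstacle," and in neither case does the flagged idea resolve it.

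For (a)$\Rightarrow$(b), your assignment of special points breaks right-c.e.-ness. Take $\sigma=11$ and $\tau=10$, where $10$ survives in $T$, the global leftmost path of $[T]$ begins with $0$, and $11$ dies at some late stage. With your convention, $\alpha_{11}$ is the global leftmost path, so $d(\alpha_{11},\alpha_{10})=1$; but had $11$ survived, the distance would have been $2^{-1}$. Since "$11$ survives" is $\Pi^0_1$, no c.e.\ procedure can decide whether to enumerate a bound like $3/4$: enumerating it is fatal if $11$ later dies, while withholding it forever is wrong if $11$ survives. (Coding membership in $\emptyset'$ into which such nodes die shows the distances under your convention are not uniformly right-c.e.) No combination of "dead-node certificates" (which are $\Sigma^0_1$ evidence of \emph{death}) can certify the $\Pi^0_1$ fact of \emph{survival} that your bounds depend on. The paper avoids this by redirecting a dying $\sigma$ not to the global leftmost path but to a \emph{surviving node with the longest common prefix with $\sigma$}; this forces the stagewise distances to be monotonically non-increasing, which is exactly what right-c.e.-ness requires. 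Your proposal needs this redirection rule (or an equivalent monotonicity guarantee) to go through.

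For (b)$\Rightarrow$(a), the conditions you impose on a pair $(\vec{U},\vec{V})$ --- joint covering plus interior subfamilies whose closures sit inside their own sides --- do not imply that $\bigcup\vec{U}$ and $\bigcup\vec{V}$ are disjoint (take $\vec{U}=\vec{V}=\{X\}$), so they do not single out clopen sets. The real obstruction is that disjointness of basic balls in a right-c.e.\ space is $\Pi^0_1$ and cannot in general be traded for a $\Sigma^0_1$ condition that still captures every clopen set; consequently the set of valid complementary pairs is only $\mathbf{0}'$-enumerable, and the natural presentation of $\mathrm{Clop}(X)$ one reads off the space is $\Pi^0_2$ rather than c.e. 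The paper accepts this and closes the gap with a theorem of Odintsov and Selivanov: every $\Pi^0_2$-presented Boolean algebra has a c.e.\ presentation (via a nontrivial change of presentation, not the identity map). Without invoking that collapse (or reproving it), a direct c.e.\ enumeration of the clopen algebra of the kind you describe does not appear to be available.
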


Feiner~\cite{feiner1970} constructed a c.e.~presentable Boolean algebra $B$ such that $B$ does not have computable copies. By Theorem~\ref{theo:Boolean}, one can assume that the Polish space $\widehat{B}$ is right-c.e. By Proposition~\ref{prop:right-ce}, $\widehat{B}$ is a computable topological space. 

On the other hand, since $B$ is not computably presentable, Theorem~1.1 of~\cite{HTMN-ta} (to be discussed) implies that $\widehat{B}$ is \emph{not} homeomorphic to a computable Polish space. Therefore, the space $\widehat{B}$ satisfies Theorem~\ref{theo:no_comp_Polish}. 

In the remainder of the section, we prove Theorem~\ref{theo:Boolean}.



\subsection{Proof of Theorem~\ref{theo:Boolean}} The case when $B$ is finite  is trivial. Thus, throughout the rest of the proof  we assume that the Boolean algebra is countably infinite.
First, we briefly discuss the techniques which we will use in the proof.

Let $T$ be a subtree of $2^{<\omega}$. As usual, $[T]$  denotes the set of all infinite paths through $T$. We say that $T$ is a \emph{pruned tree} if for any $\sigma \in T$, there is a path $x\in [T]$, which goes through $\sigma$.

Boolean algebras are treated as structures in the language $L_{BA} = \{ \cup, \cap, \overline{\,\cdot\,},0,1\}$.
Consider an extended language $L' = L_{BA} \cup \{ E^2\}$. Let $n$ be a non-zero natural number. We say that an $L'$-structure $\mathcal{C}$ is a \emph{$\Sigma^0_n$-presentation} of a Boolean algebra $\mathcal{B}$ if $\mathcal{C}$ satisfies the following conditions:
\begin{enumerate}
	\item $\mathrm{dom}(\mathcal{C}) = \omega$,
	
	\item the $L_{BA}$-reduct of $\mathcal{C}$ is a computable structure,  

	\item$E\in \Sigma^0_n$, and $E$ is a congruence of the $L_{BA}$-reduct of $\mathcal{C}$,
	
	\item the quotient $L_{BA}$-structure $\mathcal{C}/ E$ is isomorphic to $\mathcal{B}$.
\end{enumerate}
A $\Pi^0_n$-presentation of a Boolean algebra is defined in a similar way. We will use the following results of Odintsov and Selivanov~\cite{OdSel-89} (see also Section~2.4 of~\cite{HKSel-arxiv} for more details):

\begin{proposition}[{\cite{OdSel-89}}] \label{prop:OdSel}
	Let $\mathcal{B}$ be a countable Boolean algebra.
	\begin{enumerate}
		\item $\mathcal{B}$ has a computable copy if and only if $\mathcal{B}$ is isomorphic to the Boolean algebra of clopen subsets of $[T]$ for some computable pruned tree $T$ (Lemma~3 of~\cite{OdSel-89}).
	
		\item $\mathcal{B}$ has a c.e.\ presentation iff $\mathcal{B}$ is isomorphic to the algebra of clopen subsets of $[T]$ for a co-c.e.\ pruned tree $T$ (Lemma~3 of~\cite{OdSel-89}).
		
		\item If $\mathcal{B}$ has a $\Pi^0_2$-presentation, then $\mathcal{B}$ admits a c.e.\ presentation (Corollary~2 of~\cite{OdSel-89}).
	\end{enumerate}
\end{proposition}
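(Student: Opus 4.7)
The proposition bundles three tree-presentation claims for countable Boolean algebras, so my plan is to prove (1), (2), and (3) in sequence, with the first two being parallel constructions and the third being the technically subtle reduction. The common tool is the standard ``labeled binary tree'' representation of a countable BA via a refining sequence of partitions of unity: given $\mathcal{B}$, enumerate it as $\{b_0,b_1,\dots\}$ and inductively pick partitions $\{c_\sigma : \sigma\in 2^n\}$ of $1$ refining the preceding partition and separating $\{b_0,\dots,b_n\}$, so that each $b_i$ becomes a finite union of $c_\sigma$'s. Setting $T=\{\sigma : c_\sigma\neq 0\}$ yields a tree, and $c_\sigma \mapsto [\sigma]\cap[T]$ should extend to an isomorphism $\mathcal{B}\cong \mathrm{Clopen}([T])$.

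For (1), the easier direction is immediate: if $T$ is a computable pruned tree, the clopen sets of $[T]$ are finite disjoint unions of cylinders $U_\sigma=[\sigma]\cap[T]$ with $\sigma\in T$, and since $T$ is decidable we can decide $U_\sigma=\emptyset$ (iff $\sigma\notin T$) and compute Boolean operations on canonical representations. Conversely, if $\mathcal{B}$ is computable, the partition construction above goes through with decidable equality in $\mathcal{B}$, and since we only include $\sigma$ with $c_\sigma\neq 0$, the resulting $T$ is computable; pruning is automatic from the refinement condition. Part (2) uses exactly the same scheme ``\emph{mutatis mutandis}'' with equality replaced by c.e.\ equality: from a co-c.e.\ pruned $T$, the kernel $I=\{e : U_e=\emptyset\}$ of the canonical surjection from the countable atomless BA is c.e., giving a c.e.\ presentation $\mathcal{F}/I$; conversely, from $\mathcal{F}/I$ with $I$ c.e., run the partition construction but declare $\sigma \notin T$ only after $c_\sigma\in I$ is enumerated, yielding a co-c.e.\ pruned tree.

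Part (3) is where I expect the real work to lie. The natural plan is to reduce to (2) by building, from a $\Pi^0_2$-presentation $\mathcal{C}$ of $\mathcal{B}$, a co-c.e.\ pruned tree $T$ with $\mathrm{Clopen}([T])\cong \mathcal{B}$. The obstruction is that the key ``removal event'' $c_\sigma \sim_E 0$ is $\Pi^0_2$ rather than $\Sigma^0_1$, so one cannot simply wait and enumerate as in (2). The strategy I would try is an approximation/markers construction in which, at each stage $s$, we maintain a current finite partition and commit to labels using guesses about which $\Pi^0_2$-facts ``$c = 0$'' have stabilised; when a previously-active node turns out to carry an element later identified with $0$ (a $\Sigma^0_2$-event), we rescue its role by further splitting a neighbouring live partition element, relying crucially on the fact that finitely generated Boolean subalgebras are finite, so reshufflings only affect finitely much at a time. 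The verification has two halves: (i) along every branch $\sigma \in 2^{<\omega}$, the label $c_\sigma$ settles, and removals happen only finitely often, so $T$ is co-c.e.\ and pruned; (ii) every element of $\mathcal{B}$ is eventually captured as a finite union of $c_\sigma$'s, giving the required isomorphism. The hardest part, and the step most likely to force technical bookkeeping, will be arguing that the ``commit then possibly rescue'' procedure actually produces a \emph{$\Pi^0_1$} tree and not merely a $\Delta^0_2$ one; this is precisely where the one-quantifier collapse afforded by finiteness of finitely generated Boolean subalgebras is indispensable. Once $T$ is built, part (2) supplies the desired c.e.\ presentation.
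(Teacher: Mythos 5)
First, note that the paper does not prove this proposition at all: it is imported verbatim from Odintsov and Selivanov \cite{OdSel-89} (Lemma~3 and Corollary~2 there), so there is no in-paper argument to compare against. Judged on its own terms, your treatment of (1) and (2) is the standard tree-basis argument and is essentially correct: the refining partitions of unity give $T=\{\sigma: c_\sigma\neq 0\}$, prunedness comes from $c_{\sigma 0}\vee c_{\sigma 1}=c_\sigma$, and in (2) the cleanest formulation of the forward direction is that for a c.e.\ presentation $\mathcal{F}/I$ one takes $T=\{\sigma : [\sigma]\notin I\}$, which is co-c.e.\ and pruned and satisfies $\mathrm{Clop}([T])\cong\mathcal{F}/I$ because an element of $\mathcal{F}$ lies in $I$ if and only if every cylinder in its canonical decomposition does.

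The gap is in (3), which you correctly identify as the substantive claim but do not actually prove. The obstruction is exactly where you locate it: ``$c\not\sim_E 0$'' is only $\Sigma^0_2$, so its stage-by-stage approximation is not monotone, while removal of a node from a co-c.e.\ tree is irrevocable. Your ``commit then rescue'' plan must therefore handle the following points, none of which are addressed: when a node $\sigma$ is killed because $c_\sigma$ currently looks like $0$, every descendant of $\sigma$ is killed with it, so you must relocate not just $c_\sigma$ but the entire finite partition hanging below it; the relocation must go \emph{below the node currently representing the parent element}, since otherwise a permanently live node can end up with no permanently live children and $T$ fails to be pruned --- and you only know which live node represents the parent up to the $\Pi^0_2$ congruence you are still approximating. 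One must also verify that each genuinely nonzero element is relocated only finitely often (this is where the eventual stabilisation of the $\Sigma^0_2$ approximation is actually used) and that the limit assignment still induces an isomorphism onto $\mathrm{Clop}([T])$. None of this is fatal --- it is essentially the content of Odintsov--Selivanov's Corollary~2 --- but as written your part (3) is a plan whose decisive verification is explicitly deferred, not a proof.
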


Recall that for a Boolean algebra $\mathcal{B}$, by $\widehat{\mathcal{B}}$ we denote its Stone space, i.e., the space of its ultrafilters. Harrison-Trainor, Melnikov, and Ng~\cite{HTMN-ta} established the following effective version of Stone duality:

\begin{proposition}[Theorem~1.1 of {\cite{HTMN-ta}}] \label{prop:HTMN}
	For a countable Boolean algebra $\mathcal{B}$, the following conditions are equivalent:
	\begin{enumerate}
		\item $\mathcal{B}$ has a computable copy,
		
		\item the space $\widehat{\mathcal{B}}$ is homeomorphic to a computable Polish space.
	\end{enumerate}
\end{proposition}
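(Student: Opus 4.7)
We reduce both directions to Proposition~\ref{prop:OdSel}(2), which tells us that a countably infinite Boolean algebra $B$ has a c.e.\ presentation if and only if $B \cong \mathrm{clopen}([T])$ for some co-c.e.\ pruned tree $T \subseteq 2^{<\omega}$; in that case $\widehat{B}$ is canonically homeomorphic to $[T]$ with the subspace topology inherited from $2^{\omega}$. It therefore suffices to show that $[T]$ admits a compatible, complete, right-c.e.\ metric with effectively compact induced computable topological structure precisely when $T$ can be taken co-c.e.\ pruned.

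For $(a) \Rightarrow (b)$, we fix such a co-c.e.\ pruned tree $T$ and equip $[T]$ with the usual tree metric inherited from $2^{\omega}$ (which is compatible and complete). As dense sequence we take $(\alpha_{\sigma})_{\sigma \in 2^{<\omega}}$, where $\alpha_{\sigma} \in [T]$ is the leftmost infinite extension of $\sigma$, with a fixed default path when $\sigma \notin T$. The key step is verifying that distances are uniformly right-c.e.: for incomparable $\sigma, \tau$ one immediately has the computable value $d(\alpha_{\sigma}, \alpha_{\tau}) = 2^{-|\sigma \wedge \tau|}$, whereas for comparable $\sigma \subsetneq \tau$ upper bounds on $d(\alpha_{\sigma}, \alpha_{\tau})$ are enumerated by observing the co-c.e.\ approximation $T = \bigcap_{s} T_{s}$: each time a node on the current leftmost path through $\sigma$ is detected to lie outside $T$, the leftmost extension of $\sigma$ is pulled further to the right, revealing additional agreement with $\alpha_{\tau}$. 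Effective compactness is more direct: every basic ball of radius $2^{-n}$ coincides with a basic clopen $[\rho] \cap [T]$ for some $\rho$ of length $n + 1$, and a finite family of such basic clopens covers $[T]$ if and only if, at some finite stage $s$, every node of $T_{s}$ at level $\max |\rho_i|$ extends one of the prescribed strings---a c.e.\ condition because $T_{s}$ shrinks monotonically to $T$.

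For $(b) \Rightarrow (a)$, our goal is to build a co-c.e.\ pruned tree $T$ with $[T] \cong \widehat{B}$, which by Proposition~\ref{prop:OdSel}(2) yields a c.e.\ presentation of $B$. We inductively construct a refining binary tree of clopen subsets $(C_{\sigma})_{\sigma \in 2^{<\omega}}$ of $\widehat{B}$ satisfying $C_{\emptyset} = \widehat{B}$, $C_{\sigma} = C_{\sigma 0} \sqcup C_{\sigma 1}$, and $\mathrm{diam}(C_{\sigma}) \leq 2^{-|\sigma|}$, exploiting zero-dimensionality of $\widehat{B}$ together with effective compactness to enumerate small clopen partitions. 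Emptiness of a cell $C_{\sigma}$ is then a c.e.\ event: if the complement $\widehat{B} \setminus C_{\sigma}$ is represented as a finite union $\vec{V}$ of basic open balls, then $C_{\sigma} = \emptyset$ is equivalent to $\vec{V}$ covering $\widehat{B}$, which is c.e.\ by effective compactness. Setting $T := \{\sigma : C_{\sigma} \neq \emptyset\}$ gives a co-c.e.\ pruned tree, with prunedness following from compactness of $\widehat{B}$, and the assignment $x \mapsto \bigcap_{n} C_{x \restriction n}$ (a singleton by the diameter condition) furnishes a homeomorphism $[T] \to \widehat{B}$.

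The main obstacle we foresee lies in effectively producing the clopen partitions in the $(b) \Rightarrow (a)$ direction, since in a right-c.e.\ metric space basic open balls need not themselves be clopen and disjointness of two basic balls is $\Pi^{0}_{1}$ rather than c.e. The splittings $C_{\sigma} = C_{\sigma 0} \sqcup C_{\sigma 1}$ must therefore be orchestrated through \emph{formal clopen pairs} $(\vec{U}, \vec{V})$ of finite families of basic open balls, with the covering relation $\vec{U} \cup \vec{V} = \widehat{B}$ witnessed by effective compactness in lieu of any direct verification of disjointness. Once this combinatorial step is in hand, the remainder of the argument reduces to routine bookkeeping and an appeal to Proposition~\ref{prop:OdSel}(2).
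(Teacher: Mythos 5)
Your proposal does not actually address the stated proposition; it is a sketch of a different theorem. Proposition~\ref{prop:HTMN} asserts that $\mathcal{B}$ has a \emph{computable} copy iff $\widehat{\mathcal{B}}$ is homeomorphic to a \emph{computable} Polish space, whereas everything in your argument is pitched at the c.e./right-c.e.\ level: you invoke Proposition~\ref{prop:OdSel}(2) (co-c.e.\ pruned trees $\leftrightarrow$ c.e.\ presentations) and your declared target is a ``compatible, complete, right-c.e.\ metric with effectively compact induced computable topological structure.'' That is condition (b) of Theorem~\ref{theo:Boolean}, which is a genuinely different statement: by Feiner's example the two notions separate (a Boolean algebra with a c.e.\ presentation but no computable copy has a Stone space that is right-c.e.\ and effectively compact presentable, yet --- by the very proposition you were asked to prove --- is \emph{not} homeomorphic to any computable Polish space). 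Note also that the paper does not prove Proposition~\ref{prop:HTMN}; it imports it as Theorem~1.1 of \cite{HTMN-ta}, so there is no internal proof to match, but a correct proof would have to differ substantially from what you wrote.

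The forward direction could be repaired by switching to Proposition~\ref{prop:OdSel}(1): for a \emph{computable} pruned tree $T$ the leftmost infinite extensions $\alpha_\sigma$ are uniformly computable and all pairwise distances in the tree metric are computable, so $[T]\cong\widehat{\mathcal{B}}$ is a computable Polish space. The genuine obstruction is the converse. There your hypothesis supplies only that $\widehat{\mathcal{B}}$ is homeomorphic to \emph{some} computable Polish space, with no effective compactness; the paper itself recalls that computably metrized compact spaces need not be homeomorphic to effectively compact ones. Yet your key step --- that $C_\sigma=\emptyset$ is a c.e.\ event because a finite cover of the complement is detected ``by effective compactness'' --- depends on exactly that missing hypothesis. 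Bridging this gap is the hard content of Theorem~1.1 of \cite{HTMN-ta} and does not follow from the routine bookkeeping you describe. A secondary gap, even within your own (right-c.e., effectively compact) framework: you propose to certify the splittings $C_\sigma=C_{\sigma 0}\sqcup C_{\sigma 1}$ by witnessing only the covering relation and dispensing with ``any direct verification of disjointness,'' but without disjointness a point may lie in both cells, the map $x\mapsto\bigcap_n C_{x\upharpoonright n}$ is not well defined, and $T$ need not code the clopen algebra. The paper's proof of the analogous direction of Theorem~\ref{theo:Boolean} avoids this by requiring formally non-intersecting names (a $\Pi^0_1$ condition), settling for a $\Pi^0_2$-presentation, and then invoking Proposition~\ref{prop:OdSel}(3) rather than producing a co-c.e.\ tree directly.
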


We proceed to the proof of Theorem~\ref{theo:Boolean}.


\subsubsection{Proof of (a)$\Rightarrow$(b)} 

Suppose that a Boolean algebra $\mathcal{B}$ has a c.e.~presentation. By Proposition~\ref{prop:OdSel}, one can choose a co-c.e.\ pruned tree $T$ such that $\mathcal{B}$ is isomorphic to the algebra of clopen subsets of $[T]$.

We define a right-c.e.\ Polish presentation $(M,d)$ for the space $\widehat{\mathcal{B}}$. We put $M=[T]$, and the distance $d$ is induced by the standard ultrametric on the Cantor space $2^{\omega}$. We build a dense sequence $(\alpha_i)_{i\in\omega}$ inside $(M,d)$~--- our construction needs to ensure that the distances $d(\alpha_i,\alpha_j)$ are uniformly right-c.e. Note that in general, a special point $\alpha_i$ could be equal to $\alpha_j$ for $i\neq j$.

 Since the tree $T$ is co-c.e., we can fix an effective enumeration of its complement: 
 \[
 	2^{<\omega} \setminus T = \bigcup_{s\in\omega} V_s, \text{ where } V_s \subseteq V_{s+1}.
\] 
Since $T$ is a tree and thus has to be closed under prefixes, we can further assume that each $V_s$ is a finite union of sets of the form $\{\tau: \tau \supseteq \sigma  \}$ for some finite collection of such strings $\sigma$.
Set $\Gamma_s$ be equal to the set of all finite strings of length at most $s$ in $2^{<\omega} \setminus V_s$. Refine this sequence to a sequence $(T_s)_{s \in \omega}$
so that $T_s$ is a finite tree, and $T_{s+1}$  and $T_s$ differ by at most one string.
It should be clear that the sequence $(T_s)_{s \in \omega}$ is uniformly computable and additionally, the following properties are satisfied:

\begin{itemize}	

\item[$(i)$] if $\sigma\in T_s$, then $|\sigma| \leq s$;

	\item[$(ii)$] $\sigma \in T$ if and only if $(\exists s_0) (\forall s \geq s_0) (\sigma \in T_s)$.
	
\end{itemize}

We are ready to construct our dense sequence $(\alpha_i)_{i\in\omega}$. 
The idea is as follows. The strings of the form $\sigma 0^{\omega}$ in $T_s$ will be used to list a dense set. If $\sigma \in T_s$ leaves $T_{s+1}$, then we declare  $\sigma 0^{\omega}$ equal (in terms of the distance) to some carefully chosen currently closest $\tau 0^{\omega}$, where $\tau  \in T_{s+1}$.  This process will eventually stabilize at every level of the tree, and thus 
the resulting metric will be well-defined and right-c.e.

We identify a finite string $\sigma$ with $\sigma 0^{\omega} \in 2^{\omega}$ to make sense of $d(\sigma, \tau)$ when $\sigma, \tau \in 2^{<\omega}$.
We can also view $\sigma$ as a function $\omega \rightarrow \{0,1\}$ with finite support. Recall also that $[T]$ is non-empty (in fact,  infinite).

\

At stage $0$, let $\alpha_i = \sigma_i $, where $(\sigma_i : i \in \omega)$ is some fixed uniformly effective enumeration of all finite strings in $2^{<\omega}$.
Also, define $d(\alpha_i, \alpha_j)$ equal to the least common prefix distance ultrametic inherited from $2^{\omega}$.

\

At stage $s+1$, assume $\sigma \in T_{s} \setminus T_{s+1} = \{\sigma\} \neq \emptyset$. That is, assume $\sigma$ `leaves' $T$ at stage $s+1$. (If no string leaves $T$ then do nothing.) 
Let $\tau \in T_{s+1}$ be so that it has the longest possible common prefix with $\sigma$, and among such $\sigma$ it is the smallest under the Kleene–Brouwer order on the strings.
For each $j$ such that $\alpha_j = \sigma $ and every $k$ such that $\alpha_k = \tau$, declare $d(\alpha_j, \alpha_k) = 0$. Equivalently, declare $\alpha_j = \alpha_k$.
For any $i$, set $d(\alpha_j, \alpha_i) = d(\alpha_k, \alpha_i)$, and proceed to the next stage.

\

This concludes the construction. 

\

Let $\alpha_i[s]$ be the string $\sigma 0^\omega$ so that, at stage $s$, $\alpha_i = \sigma$.
It should be clear that every bit of $\alpha_i[s]$ can change only finitely many times; this is because strings that leave $T$ will never be introduced back in $T$. Hence, $\alpha_i = \lim_s \alpha_i[s]$ is well-defined.
Equivalently, the sequence converges in $2^{\omega}$ to a point, and this point has to be in $[T]$. Furthermore,  the sequence $\{\alpha_i\}_{i\in\omega}$ is dense in $([T],d)$.

The properties of the construction ensure that
\[
	d(\alpha_i[s+1],\alpha_j[s+1]) \leq d(\alpha_i[s], \alpha_j[s]) \text{ for all } i,j,s.
\] 
Therefore, for a rational $q$, the condition $d(\alpha_i,\alpha_j) < q$ holds if and only if there is a stage $s$ such that $d(\alpha_i[s], \alpha_j[s]) < q$. We deduce that the reals $d(\alpha_i,\alpha_j)$ are uniformly right-c.e., and the Stone space $\widehat{\mathcal{B}}$ has a right-c.e.\ Polish presentation.

Now it remains to show that $\widehat{\mathcal{B}}$ is effectively compact (as a topological space). 
The desired effective enumeration of finite open covers is constructed as follows. 
At a stage $s$, we add a tuple of basic open balls if it seems to cover the whole space $[T]$, according to our current best guess. More formally, 
for a potential cover 
\[
	\vec{B} = ( B(\alpha_{i_0}, r_0), \dots,  B(\alpha_{i_k}, r_k)),
\]
we can check that $\vec{B}$ covers every node in the finite tree $T_s$. It follows by induction on the stage of the construction that $\vec{B}$ will remain a cover of $[T]$ at every later stage: we never introduce new points outside of the cover, and all the points which are already in the cover will remain inside it (since distances between points can only get smaller).

We argue that all finite covers (by basic clopen balls) will be eventually listed in this enumeration. Fix one such cover
\[
	\vec{B} = (B(\alpha_{i_0}, 2^{-l_0}), \dots,  B(\alpha_{i_k}, 2^{-l_k})).
\]
Consider a stage $s_0$ such that $\alpha_i[s] \upharpoonright m = \alpha_i \upharpoonright m$
 for $m = \max(l_0,\dots,l_k)$ and all $s\geq s_0$. Then $\vec{B}$ is a cover at every stage $s\geq s_0$. Therefore, it will be listed.

\begin{remark}
We suspect that the argument above can perhaps be extended to an arbitrary $\Pi^0_1$ closed subset of a computably compact  $K$ using the computable version of Hausdorff--Alexandroff Theorem (see~\cite[Prop.~4.1]{BBP}, and see \cite[Thm.1(vii)]{EffedSurvey} for two alternative proofs). 
\end{remark}


\subsubsection{Proof of (b)$\Rightarrow$(a)}

Suppose that the Stone space $\widehat{\mathcal{B}}$ has a right-c.e.~Polish presentation $(M,d)$. Let $(\alpha_i)_{i\in\omega}$ be its sequence of special points. By Proposition~\ref{prop:OdSel}, it is sufficient to build a $\Pi^0_2$-pre\-sen\-ta\-ti\-on $\mathcal{C}$ of the Boolean algebra $\mathcal{B}$.

We employ the tree-basis technique thoroughly explained in the monograph~\cite{Gon-97}. We outline it here. The full binary tree $T = 2^{<\omega}$ can be treated as a computable tree-basis of a computable atomless Boolean algebra $\mathcal{A}$. We declare that the $L_{BA}$-reduct of our presentation $\mathcal{C}$ is equal to $\mathcal{A}$.

We fix an effective enumeration \[
	\big\{  \vec{B}^i = (B^i_0, B^i_1, \dots, B^i_{k_i}) \big\}_{i\in\omega}
\] 
of all possible finitary names in the space $M$. 
For a basic open ball $B$, by $r(B)$ we denote the radius of $B$, and $c(B)$ denotes the center of $B$. 

	Let $U$ be an open set with a finitary name $\vec{B} = (B_0,B_1,\dots,B_k)$. Then the following two conditions are equivalent:
	\begin{itemize}
		\item[(i)] $U$ is clopen and $U \neq M$.
		
		\item[(ii)]	There exists another open set $V$ with a finitary name $\vec{D} = (D_0,D_1,\dots,D_{\ell})$ such that:
		\begin{itemize}
			\item[(a)] $U \cup V = M$, and
			
			\item[(b)] the balls $B_i$ and $D_j$ do not intersect, for all $i$ and $j$.
		\end{itemize}
	\end{itemize}
	All (clopen) sets $U$ satisfying Condition~(ii) can be listed using $\mathbf{0}'$: Condition~(a) is $\Sigma^0_1$ by effective compactness, and~(b) is $\Pi^0_1$, since it is equivalent to 
	\[
		\neg \exists k [ d(\alpha_k, c(B_i)) < r(B_i) \ \&\  d(\alpha_k, c(D_j)) < r(D_j)].
	\]

We fix a $\mathbf{0}'$-effective list $(U_i)_{i\in\omega}$ of all clopen $U_i$ satisfying Condition~(ii) 
(i.e., we fix a $\mathbf{0}'$-computable function, which maps $i\in\omega$ to a strong index of a finitary name of a clopen set that we denote by $U_i$).

Every node $\sigma\neq\emptyset$ of the tree $T$ is associated with a clopen set $V_{\sigma}$, which is defined as follows:
\[
	V_{\sigma} = U^{\sigma(0)}_0 \cap U^{\sigma(1)}_1 \cap \dots \cap U^{\sigma(|\sigma|-1)}_{|\sigma|-1},
\]
where $U^1 = U$ and $U^0 = \overline{U} = M\setminus U$. We define $V_{\emptyset} = M$.

Now the structure $\mathcal{A}$ can be identified with the \emph{formal algebra} of all clopen subsets of $M$: The family $\{ V_{\sigma} : \sigma \in T\}$ can be treated as a tree-basis for the algebra $\tilde{\mathcal{B}}$ of clopen subsets of $M$. The formal $\mathcal{A}$-operations $\cup$, $\cap$, and $\overline{\,\cdot\,}$, induced by this tree-basis, are precisely the standard set-theoretic operations inside $\tilde{\mathcal{B}}$. Note that in this formal algebra, a clopen set  can have many names: e.g., it can be the case that $U_0 = U_1$~--- this implies that $V_1 = V_{11} = V_{11} \cup V_{10}$.

A congruence relation $E$ on the formal algebra $\mathcal{A}$ can be defined as follows. Given strings $\sigma$ and $\tau$, one can computably find a tuple $\xi_0,\xi_1,\dots,\xi_m \in T$ such that 
\[
	V_{\sigma}\triangle V_{\tau} \overset{\mathrm{df}}{=} (V_{\sigma}\cap \overline{V_{\tau}}) \cup (\overline{V_{\sigma}} \cap V_{\tau}) = \bigcup_{i\leq m} V_{\xi_i}.
\]
Then define:
\[
	V_{\sigma} \not\sim_E V_{\tau}  \ \Leftrightarrow\ \emptyset \not\sim_E \bigcup_{i\leq m} V_{\xi_i} \ \Leftrightarrow\ (\exists i\leq m) (V_{\xi_i} \neq \emptyset) \ \Leftrightarrow\ \bigcup_{i\leq m} V_{\xi_i} \neq \emptyset.
\]

	The quotient structure $\mathcal{A}/E$ is isomorphic to the algebra of all clopen subsets of $M$.

\begin{claim}\label{claim:aux-not-empty}
	The condition $V_{\sigma} \neq \emptyset$ is $\Sigma^0_2$. 
\end{claim}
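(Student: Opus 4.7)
The plan is to express ``$V_\sigma \neq \emptyset$'' as a $\Sigma^0_2$ condition uniformly in $\sigma$, by unpacking it into membership assertions of the dense sequence and then exploiting the right-c.e.~metric.

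First I would enrich the $\mathbf{0}'$-computable list $(U_i)_{i\in\omega}$ so that, along with a finitary name of each $U_i$, we also have a $\mathbf{0}'$-computable finitary name of the complement $\overline{U_i}$. This is free from Condition~(ii): the witness $V$ in that condition has a finitary name whose balls are pairwise disjoint from those of $U_i$, while $U_i \cup V = M$, which forces $V = M \setminus U_i$. So $\mathbf{0}'$ can simultaneously list finitary names for $U_i^1 = U_i$ and $U_i^0 = \overline{U_i}$, uniformly in $i$.

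Next, since $V_\sigma$ is open and the sequence $(\alpha_j)_{j\in\omega}$ is dense in $M$, we have $V_\sigma \neq \emptyset$ if and only if $\exists j\,(\alpha_j \in V_\sigma)$. Unpacking the definition of $V_\sigma$, the inner condition is the finite conjunction
\[
\alpha_j \in V_\sigma \ \Longleftrightarrow\ \bigwedge_{i < |\sigma|} \alpha_j \in U_i^{\sigma(i)}.
\]
Each conjunct asserts that $\alpha_j$ lies in some ball occurring in a $\mathbf{0}'$-computable finitary name, i.e.~it has the form $\exists \ell \le k\,(d(\alpha_j, c(B_\ell)) < r(B_\ell))$ where $(B_0, \ldots, B_k)$ is a name produced by a $\mathbf{0}'$-computable function of $\sigma$ and $i$. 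Because $(M,d)$ is right-c.e., the relation $d(\alpha_j, c(B)) < r(B)$ is $\Sigma^0_1$ uniformly in the center-radius data of $B$ and in $j$; so each conjunct is $\Sigma^0_1(\mathbf{0}') = \Sigma^0_2$ uniformly in $\sigma, i, j$. A finite conjunction of $\Sigma^0_2$ conditions is $\Sigma^0_2$, and so is the existential quantifier over $j$. This yields the $\Sigma^0_2$ definition of ``$V_\sigma \neq \emptyset$'' and completes the argument.

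The only mild obstacle is the bookkeeping for complements in the first step; once we have finitary names for both $U_i$ and $\overline{U_i}$ at the $\mathbf{0}'$-level, the rest is a direct complexity calculation that relies crucially on the fact that in a right-c.e.~space membership of a dense point in a basic open ball is a $\Sigma^0_1$ condition.
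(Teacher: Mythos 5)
Your proposal is correct and follows essentially the same route as the paper: density of $(\alpha_j)_{j\in\omega}$ reduces nonemptiness to $\exists j\,(\alpha_j \in V_\sigma)$, and the matrix is then bounded in complexity using the right-c.e.\ metric together with the $\mathbf{0}'$-computability of the finitary names, giving $\Sigma^0_2$ overall. The only difference is cosmetic: where the paper handles the complements $U_i^0 = \overline{U_i}$ by observing that membership of a special point in a basic ball is $\mathbf{0}'$-decidable (hence so is non-membership), you extract a positive finitary name for $\overline{U_i}$ from the witness $V$ in Condition~(ii) --- which is indeed available, since $U \cup V = M$ together with formal disjointness forces $V = M \setminus U$ --- and keep every conjunct in positive $\Sigma^0_1(\mathbf{0}')$ form.
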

\begin{proof}
Let $B$ be a basic open ball, and let $i\in\omega$. Since our space is right-c.e., checking whether $\alpha_i \in B$ is a $\mathbf{0}'$-computable procedure, which is uniform in $B$ and $i$. This fact implies the following: given $i$ and a clopen set $U$, which is described as a Boolean combination of basic open balls, one can $\mathbf{0}'$-effectively check whether $\alpha_i$ belongs to $U$. Note that $V_{\sigma} \neq \emptyset$ if and only if there is $i\in\omega$ such that $\alpha_i \in V_{\sigma}$ (since $V_{\sigma}$ is open).

Recall that
\begin{equation}\label{equ:aux-not-empty}
	V_{\sigma} = U^{\sigma(0)}_0 \cap U^{\sigma(1)}_1 \cap \dots \cap U^{\sigma(|\sigma|-1)}_{|\sigma|-1},
\end{equation}
where each $U_i^{\sigma(i)}$ is either a finite union of basic open balls, or the complement of such a union.
Using $\mathbf{0}'$, given $\sigma$, we compute all finitary names of $U_i$ from the decomposition~(\ref{equ:aux-not-empty}).
If there exists an $\alpha_i \in V_{\sigma}$, then $\mathbf{0}'$ will eventually witness it. We deduce that checking the condition $V_{\sigma} \neq \emptyset$ is $\mathbf{0}'$-c.e. 
\end{proof}

Claim~\ref{claim:aux-not-empty} implies that the congruence $E$ is $\Pi^0_2$. 
Therefore, $\mathcal{C} = (\mathcal{A},E)$ is a $\Pi^0_2$-presentation of the original algebra $\mathcal{B}$, and thus $\mathcal{B}$ admits a c.e. presentation by (3) of  Proposition~\ref{prop:OdSel}.
\qed




\section{Categoricity for Stone spaces. Proof of Theorem~\ref{theo:comp-cat}.}

Recall that Theorem~\ref{theo:comp-cat} says that, for a computable Boolean algebra $\mc{B}$, $\mc{B}$ is computably categorical if and only if the Stone space $\widehat{\mathcal{B}}$ is effectively categorical.

\begin{proof}
\underline{(b)$\Rightarrow$(a).} Suppose that the space $\widehat{\mathcal{B}}$ is effectively categorical, meaning that each pair of effectively compact presentations of the space are computably homeomorphic.

Let $\mathcal{A}$ be a computable copy of the algebra $\mathcal{B}$. Following the proof of Proposition~\ref{prop:OdSel}.(1), one can build a computable pruned tree $T_A$ such that $\mathcal{A}$ is isomorphic to the Boolean algebra $\mathrm{Clop}([T_A])$ of all clopen subsets of $[T_A]$. The metric $d$ on $[T_A]$ is induced by the standard ultrametric on $2^{\omega}$, and it is not hard to recover an effectively compact presentation of $([T_A],d)$~--- see, e.g., Theorem~2.9 of~\cite{HKSel-arxiv} for more details. Let $M_{\mathcal{A}}$ denote this effectively compact presentation.

The transformation $\mathcal{A} \mapsto M_{\mathcal{A}}$ has the following nice properties. Given an element $a\in\mathcal{A}$ such that $a\neq 0_{\mathcal{A}}$, one can effectively recover a finite tuple $\sigma_0,\sigma_1,\dots,\sigma_k\in T_A$ such that the natural isomorphism from $\mathcal{A}$ onto $\mathrm{Clop}([T_A])$ acts as follows:
\[
	a \mapsto  V_a := \{ x\in[T_A]\,\colon x \text{ goes through one of }  \sigma_i \}.
\]
Moreover, one can effectively find a finitary name $\vec{B}^a$ for the clopen set $V_a$.

Given two tuples $\sigma_0,\sigma_1,\dots,\sigma_k$ and $\tau_0,\tau_1,\dots,\tau_{\ell}$, one can effectively check whether the sets
\[
	Z_{\bar \sigma} = \{ x\in [T_A]\,\colon x \text{ goes through one of }  \sigma_i \} \text{ and }	
	Z_{\bar \tau} = \{ y\in [T_A]\,\colon y \text{ goes through one of }  \tau_j \}
\]
are equal or not. The idea behind this effective procedure can be illustrated by the following example\footnote{One can design a more elegant, although not self-contained, procedure using the theory of computably compact sets. We give a more brute-force and explicit way to decide intersection.}.

Consider $\sigma$ and $\tau_0,\tau_1$ from $T_A$. Then there are three possible cases:
\begin{enumerate}
	\item If one of $\tau_i$-s is incomparable with $\sigma$, then there is infinite path $x$, which goes through this $\tau_i$, but does not go through $\sigma$.
	
	\item Suppose that both $\tau_i$ are comparable with $\sigma$, $\tau_0 \subseteq \sigma$, and $|\tau_0| \leq |\tau_1|$. Then (since the tree $T_A$ is a computable subtree of $2^{<\omega}$) we can effectively find all strings $\xi_k$ such that $\xi_k\supseteq \tau_0$ and $|\xi_k| = |\sigma|$. If among them, there is a string $\xi_k \neq \sigma$, then there is a path $x$ going through $\xi_k \supseteq \tau_0$, but not through $\sigma$. Otherwise, we have $Z_{\sigma} = Z_{\tau_0,\tau_1}$.
	
	\item The last remaining case is when we have $\tau_0 \supset \sigma$ and $\tau_1 \supset \sigma$. We find all strings $\zeta_k$ such that $\zeta_k \supset \sigma$ and $|\zeta_k| = \max\{ |\tau_0|, |\tau_1|\}$. If among them, there is a string $\zeta_k$ such that $\zeta_k \nsupseteq \tau_0$ and $\zeta_k \nsupseteq \tau_1$, then there is a path going through $\zeta_k \supset \sigma$, but not hitting $\tau_0$ and $\tau_1$. Otherwise, the sets $Z_{\sigma}$ and $Z_{\tau_0,\tau_1}$ are equal.
\end{enumerate}

\ 

Now we are ready to prove that the algebra $\mathcal{B}$ is computably categorical. Let $\mathcal{A}$ and $\mathcal{C}$ be computable copies of $\mathcal{B}$. Consider the compact presentations $M_{\mathcal{A}}$ and $M_{\mathcal{C}}$, and fix an effectively continuous surjective homeomorphism $f$ acting from $M_{\mathcal{C}}$ onto $M_{\mathcal{A}}$. We construct a computable isomorphism $g$ from $\mathcal{A}$ onto $\mathcal{C}$.

Let $b$ be an element from $\mathcal{A}$ such that $b\not\in \{ 0_{\mathcal{A}}, 1_{\mathcal{A}}\}$. We effectively recover the finitary names $\vec{B}^b$ and $\vec{B}^{\overline{b}}$ for the clopen sets $V_b$ and $V_{\overline{b}}$ representing the element $b$ and its complement $\overline{b}$. 

By Lemma~\ref{lem:effective-continuous}, we fix an enumeration operator $\Phi$, which given an open name of $V\subseteq M_{\mathcal{A}}$, outputs an open name of the set $f^{-1}(V) \subseteq M_{\mathcal{C}}$.

We effectively enumerate the open names:
\begin{gather*}
	\Phi(\vec{B}^{b}) = \{ C_0, C_1, C_2, \dots\},\quad 
	\Phi(\vec{B}^{\overline{b}}) = \{ D_0, D_1, D_2,\dots\}.
\end{gather*}
Note that both of these lists could be \emph{infinite}. On the other hand, the sets $f^{-1}(V_b) = \Phi(\vec{B}^{b})$ and $f^{-1}(V_{\overline{b}}) = \Phi(\vec{B}^{\overline{b}})$ form a splitting of the space $M_{\mathcal{C}}$. Hence, since the presentation $M_{\mathcal{C}}$ is compact, eventually we will find a number $k$ such that $C_0,C_1,\dots C_k,D_0,D_1,\dots,D_k$ form an open cover of $M_{\mathcal{C}}$. This means that 
\[
	f^{-1}(V_b) = \bigcup_{i\leq k} C_i,\quad f^{-1}(V_{\overline{b}}) = \bigcup_{i\leq k} D_i.
\]

Given the basic open sets $C_0,C_1,\dots,C_k$, we effectively recover a tuple $\tau_0,\tau_1,\dots,\tau_m \in T_C$ such that
\[
	f^{-1}(V_b) = \{ x\in [T_C]\,\colon x \text{ goes through one of } \tau_i\} = Z_{\bar \tau}.
\]
Recall that the procedure of checking whether $Z_{\bar \tau}$ equals $Z_{\bar \sigma}$ is effective (see above). This implies that we can effectively find an element $d\in \mathcal{C}$ with $f^{-1}(V_b) = V_d$. We put $g(b) := d$.

Clearly, the constructed map $g$ is computable and well-defined. It is not hard to show that $g$ is an isomorphism from $\mathcal{A}$ onto $\mathcal{C}$.

\ 

\underline{(a)$\Rightarrow$(b).} Suppose that the algebra $\mathcal{B}$ is computably categorical. It is known~\cite{Goncharov-Dzgoev-80,Remmel81-BA} that $\mathcal{B}$ has only finitely many atoms. Without loss of generality, we assume that $\mathcal{B}$ is infinite. 

First, we give a detailed proof for the case when $\mathcal{B}$ is a countable atomless Boolean algebra. After that, we discuss the modifications needed for the general case.

Let $B(c,r)$ denote the basic open ball with center $c$ and radius $r$. If $D$ is a basic open ball, then by $r(D)$ we denote its radius, and by $c(D)$ we denote its center.

Consider two open sets
\[
	U = \bigcup_{i\leq k} B_i \ \text{and}\ V = \bigcup_{j\leq \ell} C_{j},
\]
where $B_i$ and $C_j$ are basic open balls. We say that $U$ and $V$ are \emph{formally non-intersecting} if for all $i$ and $j$, we have
\[
	d(c(B_i), c(C_j)) > r(B_i) + r(C_j).
\]
It is clear that formally non-intersecting $U$ and $V$ satisfy $U\cap V = \emptyset$.

We say that $U$ is \emph{formally included} into $V$ if for each $i\leq k$, there is an index $j_i \leq \ell$ such that
\[
	d( c(C_{j_i}), c(B_i) ) + r(B_i) < r(C_{j_i}).
\]
If $U$ is formally included into $V$, then $U\subseteq V$.

In the lemma below, we identify clopen subsets of $\mathcal{M}$ with their finitary names.

\begin{lemma}[This is similar to Lemma~2.4 of~\cite{HKSel-arxiv}] \label{lem:aux-tree-splitting}
	Let $\mathcal{M}$ be a compact presentation of the space $\widehat{\mathcal{B}}$. Then one can effectively build a computable tree $T_{\mathcal{M}} \subset \omega^{<\omega}$ and a sequence $\{ U^{\mathcal{M}}_{\sigma} \,\colon \sigma \in T_{\mathcal{M}}\}$ of non-empty clopen sets, each containing infinitely many elements, such that:
	\begin{itemize}
		\item[(a)] The tree $T_{\mathcal{M}}$ is finitely branching, and its branching function 
		\[
			b_{\mathcal{M}} (\sigma) = \mathrm{card}(\{ n\in\omega\,\colon \sigma\widehat{\ } n \in T_{\mathcal{M}}\})
		\]
		is computable.
		
		\item[(b)] For any $\sigma \in T_{\mathcal{M}}$, $b_{\mathcal{M}}(\sigma) \geq 2$.
		
		\item[(c)] Let $\sigma$ and $\tau$ be elements of $T_{\mathcal{M}}$.
		\begin{itemize}
			\item[(c.1)] If $\sigma\neq\emptyset$, then each basic open ball $C$ taken from the finitary name of the set $U^{\mathcal{M}}_{\sigma}$ satisfies $r(C) \leq \frac{1}{2^{|\sigma|}}$. If $\sigma = \emptyset$, then $U^{\mathcal{M}}_{\emptyset} = \widehat{\mathcal{B}}$.
		
			\item[(c.2)] If $\sigma$ and $\tau$ are siblings, then the sets $U^{\mathcal{M}}_{\sigma}$ and $U^{\mathcal{M}}_{\tau}$ are formally non-intersecting.
			
			\item[(c.3)] If $\tau$ is a child of $\sigma$, then the set $U^{\mathcal{M}}_{\tau}$ is formally included into $U^{\mathcal{M}}_{\sigma}$.
			
			\item[(c.4)] We have
			\[
				U^{\mathcal{M}}_{\sigma} = \bigcup \{ U^{\mathcal{M}}_{\xi}\,\colon \xi \in T_{\mathcal{M}},\ \xi \text{ is a child of } \sigma\}.
			\]
		\end{itemize}
	\end{itemize}
\end{lemma}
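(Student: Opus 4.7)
The plan is an effective stage-by-stage construction, processed level by level, driven by effective compactness of $\mathcal{M}$ and atomlessness of $\mathcal{B}$. Put the root $\emptyset$ into $T_{\mathcal{M}}$ with $U^{\mathcal{M}}_\emptyset = \widehat{\mathcal{B}}$ (and the empty tuple as its complement name). Suppose all $\sigma$ of length $n$ have been placed in $T_{\mathcal{M}}$, each equipped with an explicit finitary name $\vec{B}^{\sigma}$ of balls of radius $\leq 2^{-n}$ for $U^{\mathcal{M}}_{\sigma}$ together with a finitary name $\vec{D}^{\sigma}$ of the complement $\widehat{\mathcal{B}}\setminus U^{\mathcal{M}}_{\sigma}$; these complement names are maintained inductively, since the complement of any $V_i$ produced at the next level equals the concatenation of the finitary names of its siblings with $\vec{D}^{\sigma}$.

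For each such $\sigma$, launch in parallel a computable search over all finite sequences $(V_{0},V_{1},\dots,V_{m})$, $m\geq 1$, of finitary names of basic open balls of radius $\leq 2^{-(n+1)}$, accepting the first tuple that satisfies: \textit{(i)} each $V_{i}$ is formally included in $\vec{B}^{\sigma}$; \textit{(ii)} the $V_{i}$ are pairwise formally non-intersecting; \textit{(iii)} each $V_{i}$ contains some special point $\alpha_{k}$; and \textit{(iv)} the concatenated tuple $(V_{0},\dots,V_{m})\conc \vec{D}^{\sigma}$ appears in the effective enumeration of finite covers of $\widehat{\mathcal{B}}$ guaranteed by effective compactness. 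Conditions (i), (ii) are decidable from the syntax of the names, while (iii), (iv) are $\Sigma^{0}_{1}$. Once found, declare $\sigma\conc 0,\dots,\sigma\conc m$ children of $\sigma$, label them by the $V_{i}$, and update complements. The branching function $b_{\mathcal{M}}(\sigma)$ is then computable, being announced at the moment the children are added.

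The only nontrivial point is termination: a valid decomposition must exist for every $U^{\mathcal{M}}_{\sigma}$. Here the hypothesis that $\mathcal{B}$ is atomless is essential. The space $U^{\mathcal{M}}_{\sigma}\subseteq\widehat{\mathcal{B}}$ is a non-empty clopen subset of a Cantor-homeomorphic space, so it can be partitioned into finitely many non-empty clopen pieces of metric diameter $\leq 2^{-(n+1)}$. Each piece is compact inside the open set $U^{\mathcal{M}}_{\sigma}$, hence lies at positive distance both from the complement and from every other piece. By density of the special points, each piece can be covered by finitely many small balls whose centers are close enough to centers in $\vec{B}^{\sigma}$, and whose radii are small enough, to guarantee simultaneous formal inclusion in $\vec{B}^{\sigma}$ and pairwise formal non-intersection with the names produced for the other pieces; such a tuple will eventually be enumerated.

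The principal obstacle I anticipate lies in property~(iv): verifying that the candidate $V_{i}$'s together with $\vec{D}^{\sigma}$ truly cover the whole space, rather than merely the special points. This is precisely what the effective compactness of $\mathcal{M}$ provides, and it is the sole nonsyntactic ingredient of the search. Once all four requirements are satisfied, properties (a) and (b) follow from the explicit bookkeeping of children and the enforced $m\geq 1$; (c.1) from the radius cap in the search; (c.2) and (c.3) from conditions (i) and (ii); and (c.4) from condition (iv) combined with the fact that the complement $\vec{D}^{\sigma}$ contributes nothing to the part of the cover lying inside $U^{\mathcal{M}}_{\sigma}$. Infinity of each $U^{\mathcal{M}}_{\sigma}$ is automatic since in an atomless Stone space every non-empty clopen set has infinitely many ultrafilters. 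The general case in which $\mathcal{B}$ has finitely many atoms is handled by first splitting off the atomic clopen pieces at the root and applying the atomless construction to the atomless complement.
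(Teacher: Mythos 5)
Your proof is correct and follows essentially the same strategy as the paper's: a level-by-level greedy search through the effective enumeration of finite covers for tuples of small balls satisfying the (decidable) formal inclusion and formal non-intersection conditions, with termination guaranteed by splitting each clopen set into nonempty clopen pieces and covering them by sufficiently small balls centered near special points. The only organizational difference is that you refine each node separately by maintaining a finitary name of its complement (correctly justified by your inductive bookkeeping), whereas the paper instead searches, at each level, for a single global cover of $\widehat{\mathcal{B}}$ that simultaneously refines all nodes of that level, which lets it avoid complement names altogether.
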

\begin{proof}
	For the sake of completeness, here we give a sketch of the proof. The tree $T_{\mathcal{M}}$ is built by induction on $s\in\omega$. At a stage $s$, we define all vertices $\sigma \in T_{\mathcal{M}}$ with $|\sigma| = s$. 
	
	At the stage $0$, by the compactness of $\widehat{\mathcal{B}}$, we non-uniformly choose a rational $R$ such that $B(\alpha_0,R) = \widehat{\mathcal{B}}$. We put $U^{\mathcal{M}}_{\emptyset} := B(\alpha_0, R)$.
	
	\emph{Stage $s+1$.} Recall that the compact presentation $\mathcal{M}$ provides an effective enumeration of all finitary names of the space $\widehat{\mathcal{B}}$. By going through this enumeration, we search for a finite splitting of each $U_{\xi}^{\mathcal{M}}$, where $\xi \in T_{\mathcal{M}}$ and $|\xi| = s$, which satisfies the conditions (c.1)--(c.3). More formally, we search for a finitary name (provided by the enumeration), which encodes the union
	\[
		\widehat{\mathcal{B}} = \bigcup_{\xi \in T_{\mathcal{M}},\, |\xi| = s} \left( \bigcup_{i\leq m_{\xi}} \left( \bigcup_{j\leq n_{\xi,i}} E_{\xi,i,j}\right)\right),
	\]
	where $m_{\xi} \geq 1$, $n_{\xi,i}\in\omega$, and $E_{\xi,i,j}$ is a basic open ball. Let $D_{\xi,i}$ be the set
	\[
		D_{\xi,i} := \bigcup_{j\leq n_{\xi,i}} E_{\xi,i,j}.
	\]	
	Our finitary name must satisfy:
	\begin{enumerate}
		\item $r(E_{\xi,i,j}) \leq 2^{-s-1}$;
		
		\item $D_{\xi,i}$ and $D_{\xi,j}$ are formally non-intersecting for $i\neq j$;
		
		\item $D_{\xi,i}$ is formally included into $U^{\mathcal{M}}_{\xi}$.
	\end{enumerate}	
	If such a name is found, then we put 
	$U^{\mathcal{M}}_{\xi\widehat{\ }i} := D_{\xi,i}$,
	and proceed to the next stage.
	
	\ 
	
	This concludes the description of the construction. In order to finish the proof, we need to establish that every stage successfully finds its own appropriate finitary name. 

	Consider stage $s+1$. We have a finite collection of clopen sets $U_{\xi}^{\mathcal{M}}$, where $\xi \in T_{\mathcal{M}}$ and $|\xi| = s$. 
	Since the space $\widehat{\mathcal{B}}$ is totally disconnected, there exist disjoint non-empty clopen sets $V_{\xi}$ and $W_{\xi}$ such that $V_{\xi} \cup W_{\xi} =  U_{\xi}^{\mathcal{M}}$. In what follows, we always assume that $|\xi| = s$.

We define (in a non-effective way) a countable cover $\mathcal{E}_{\xi}$ of the set $V_{\xi}$. The cover $\mathcal{E}_{\xi}$ contains all open balls $B(\alpha_i, r)$ with rational $r$ such that:
	\begin{enumerate}
		\item $r \leq 2^{-s-1}$;
		
		\item $B(\alpha_i,r)$ is formally included into some $C$, where $C$ is a basic open ball taken from the finitary name of $U_{\xi}^{\mathcal{M}}$;
		
		\item $B(\alpha_i, \frac{3}{2}r) \subseteq V_{\xi}$.
	\end{enumerate}
	The cover $\mathcal{E}_{\xi}$ has a finite subcover $\mathcal{E}'_{\xi} = \{ B(\alpha_{i_0},r_0), B(\alpha_{i_1}, r_1), \dots, B(\alpha_{i_{m}}, r_{m})\}$. Notice that $B(\alpha_{i_j}, \frac{3}{2} r_j) \subseteq V_{\xi}$ for all $j\leq m$.
	
	We consider an open cover $\mathcal{F}_{\xi}$ of the set $W_{\xi}$. This cover contains all balls $B(\alpha_i, r)$ with rational $r$ such that:
	\begin{enumerate}
		\item $r \leq 2^{-s-1}$;
		
		\item $B(\alpha_i,r)$ is formally included into some basic open $C$, taken from the finitary name of $U_{\xi}^{\mathcal{M}}$;
		
		\item $B(\alpha_i,r) \subseteq W_{\xi}$, and 
		
		\item for every $j\leq m$, we have $d(\alpha_{i_j},\alpha_i) > r + \frac{4}{3} r_j$.
	\end{enumerate}
	The cover $\mathcal{F}_{\xi}$ has a finite subcover $\mathcal{F}'_{\xi}$. 
	
	It is not hard to show that by combining the covers $\mathcal{E}'_{\xi} \cup \mathcal{F}'_{\xi}$, for all $\xi$, one can obtain a finitary name that we were looking for (at the stage $s+1$). This concludes the proof of Lemma~\ref{lem:aux-tree-splitting}. 
\end{proof}

	For a string $\sigma \in 2^{<\omega}$, we set $\ell(\sigma) := \sigma\widehat{\ }0$ and $r(\sigma) := \sigma\widehat{\ }1$.

	Let $T \subset \omega^{<\omega}$ be a finitely branching tree with a computable branching function such that every $\sigma\in T$ satisfies $b_T({\sigma}) \geq 2$. We define a computable function $\psi_T \colon T\to 2^{<\omega}$ as follows.
	\begin{itemize}
		\item[(a)] $\psi_T(\emptyset) = \emptyset$.
		
		\item[(b)] Suppose that $\psi_T(\sigma)$ is already defined. Using the branching function $b_{T}$, we find all children $\tau_0,\tau_1,\dots,\tau_k$ of $\sigma$ inside $T$. We put $\psi_T(\tau_0) = \ell(\psi_T(\sigma))$, $\psi_T(\tau_1) = \ell (r (\psi_T(\sigma)))$, $\psi_T(\tau_2) = \ell r r (\psi_T(\sigma))$, \dots, $\psi_T(\tau_{k-1}) = \ell r^{k-1}(\psi_T(\sigma))$, and $\psi_T(\tau_k) = r^k (\psi_T(\sigma))$.
	\end{itemize}
	One can show that the function $\psi_T$ induces a bijection from the set $[T]$ onto the set of all paths through the full binary tree.
	
	\ 
	
	Let $\mathcal{M}$ be a compact presentation of the space $\widehat{\mathcal{B}}$. Let $\mathcal{M}_{st}$ be a standard compact presentation of the Cantor space $2^{\omega}$. We will build an effectively continuous surjective homeomorphism $f$ acting from $\mathcal{M}$ onto $\mathcal{M}_{st}$. 
	
	By Remark~\ref{remark:Brattka}, this is enough for our purposes. Indeed, if $\mathcal{M}_0$ and $\mathcal{M}_1$ are two compact presentations of $\widehat{\mathcal{B}}$, then our construction shows the existence of effectively continuous $f_0\colon \mathcal{M}_0 \to \mathcal{M}_{st}$ and $f_1\colon \mathcal{M}_1 \to \mathcal{M}_{st}$.
	Then the map $f_1^{-1}\circ f_0$ is an effectively continuous surjective homeomorphism from $\mathcal{M}_0$ onto $\mathcal{M}_1$.
	
	Given $\mathcal{M}$, we use Lemma~\ref{lem:aux-tree-splitting} and recover computable tree $T_{\mathcal{M}}$ and sequence $\{ U^{\mathcal{M}}_{\sigma} \,\colon \sigma \in T_{\mathcal{M}}\}$ of clopen sets. 
	
	Our surjective homeomorphism $f$ is built as follows. For a point $x\in \mathcal{M}$, there is a unique path $P$ through $T_{\mathcal{M}}$ such that $\{ x \} = \bigcap_{\sigma \in P} U^{\mathcal{M}}_{\sigma}$. Using the map $\psi_{T_{\mathcal{M}}}$ discussed above, we transform the path $P$ into a path $P^{\ast}$ through the full binary tree. This path is an element of the Cantor space, and we put $f(x) := P^{\ast}$.
	
	We prove that the homeomorphism $f$ is effectively continuous. By Lemma~\ref{lem:effective-continuous}, it is sufficient to construct an enumeration operator $\Psi$, which given the name $N^x$ outputs the name of the point $f(x)$.
	
	The operator $\Psi$ acts as follows. Whenever its input data provides a basic open ball $B(c,r)$ (in $\mathcal{M}$) such that it is formally included into some $U^{\mathcal{M}}_{\sigma}$ for $\sigma\in T_{\mathcal{M}}$, $\Psi$ starts outputting all basic open balls $C$ (in $\mathcal{M}_{st}$) such that $C\supseteq B(\psi_{T_{\mathcal{M}}}(\sigma), 2^{-|\psi_{T_{\mathcal{M}}}(\sigma)|-1})$.
	
	\ 
	
	The theorem for the case of atomless $\mathcal{B}$ is proved. Now suppose that $\mathcal{B}$ has precisely $N\geq 1$ atoms. For simplicity, we discuss the case when $N = 1$. 
	
	The space $\widehat{\mathcal{B}}$ has a unique isolated point. Consider a compact presentation $\mathcal{M}$ of $\widehat{\mathcal{B}}$. Without loss of generality, we assume that the isolated point is equal to $\alpha_0$. We fix a (small enough) rational $R$ such that $B(\alpha_0, R) = \{ \alpha_0\}$.
	
	We employ the construction of Lemma~\ref{lem:aux-tree-splitting} with the following key modification: we require that every set $U^{\mathcal{M}}_{\sigma}$ formally does not intersect with $B(\alpha_0,R)$. Then the construction produces a tree $T_{\mathcal{M}}$ with the same nice properties.
	
	We fix a computable tree 
	\[
		T_{st} = \{ 0^n\,\colon n\in\omega\} \cup \{ 1\widehat{\ }\sigma \,\colon \sigma \in 2^{<\omega}\}.
	\]
	Clearly, the set $[T_{st}]$ (treated as a subspace of $2^{\omega}$) is homeomorphic to $\widehat{\mathcal{B}}$. Let $\mathcal{M}_{st}$ be the standard compact presentation of $[T_{st}]$.
	
	A surjective homeomorphism $f$ from $\mathcal{M}$ onto $\mathcal{M}_{st}$ acts as follows:
	\begin{itemize}
		\item[(a)] The point $\alpha_0$ is mapped to the unique isolated path of $T_{st}$.
		
		\item[(b)] Any other point $x$ corresponds to a path $P$ through $T_\mathcal{M}$. We recover the corresponding path $P^{\ast}$ through the full binary tree, and set $f(x) := 1\widehat{\ } P^{\ast}$ (this is a path through $T_{st}$).
	\end{itemize}
	
	The corresponding enumeration operator $\Psi$ is recovered similarly to the atomless case, modulo the following: if the input data provides an open ball $B(\alpha_0,r)$ (in $\mathcal{M}$) for some $r\leq R$, we start outputting the balls $B(0^{k+1}, 2^{-k-2})$ (in $\mathcal{M}_{st}$) for $k\in\omega$.
	
	This concludes the proof of Theorem~\ref{theo:comp-cat}.
\end{proof}

\ 

Note that the proof of~(b)$\Rightarrow$(a) given above is fully relativizable.

\section{Banach-Stone theorem for Stone spaces. Proof of Theorem~\ref{thm1}} Let $\check{X}$ denote the Boolean algebra dual to a Stone space $X$.
In \cite{HTMN-ta}, it was shown that $(2)$ of Theorem~\ref{thm1} is equivalent to:
\begin{itemize}
\item[(3)]  The Boolean algebra $\check{X}$ has a computable presentation.
\end{itemize}
 We therefore prove  $(1) \iff (3)$.
To see why $(3) \implies (1)$, represent the Stone space $X$ of $\check{X}$ as a computable subset of the Cantor set in $[0,1]$.
This is done by associating elements of the Boolean algebra with clopen sets in a closed subspace of the Cantor space; see the proof of the previous theorem.
For any clopen $Z$, let $f_Z$ be the function equal to $1$ at $Z$ and to $0$ at $X \setminus Z$.
Note that $f_Z$  is continuous, and that the linear $\mathbb{Q}$-span  of $\{f_Z \colon Z \subset_{clopen} X\}$ is dense in $C(X;\mathbb{R})$.
It remains to note that the distances between $qf_Z$ and $r f_Y$ are uniformly decidable, for any $r, q \in \mathbb{Q}$ and any clopen $Z,Y$.
The rest of the proof is devoted to checking  $(1) \implies (3)$.

\

\noindent \emph{Proof idea for $(1) \implies (3)$.} 
Suppose  $C(X;\mathbb{R})$ has a computable presentation as a Banach space; that is, the norm, $+$, and scalar multiplication are represented by uniformly computable operators. It is sufficient to assume that $+$ and the norm are computable. Equivalently, we can assume that the point $0$, the associated metric $d$, and $+$ are computable; for the details see, e.g.,  Fact 2.10 of \cite{MelNg}.
Recall that every $\Delta^0_2$ Boolean algebra in which the atom relation is $\Delta^0_2$ admits a computable  presentation;
this was essentially proven  in \cite{DoJo} but appears in this exact form in \cite{knight2000}. Thus, it is sufficient to build a $\Delta^0_2$ copy of the Boolean algebra of clopen sets in $X$, such that the atoms are $\Delta^0_2$.

	\medskip
	
\

\noindent \emph{Proof of $(1) \implies (3)$.} 	Fix a computable metric space $M$ whose completion $\overline{M}$ is isomorphic to $C(X;\mathbb{R})$ and where the operation of addition and the point $0$ are computable. Think of $M$ as a particular subset of $C(X;\mathbb{R})$ via a particular isomorphism between $\overline{M}$ and $C(X;\mathbb{R})$. Non-uniformly fix $p_0,p_1 \in M$ taking values in the interval $(0,1)$ and with $d(p_0,0) < \frac{1}{32}$ and $d(p_1,1) < \frac{1}{32}$, where $1$ is the constant function $1(x) =1$, and $0$ is the zero of $C(X ; \mathbb{R})$. Without loss of generality, we can assume $p_0 = 0$ by adding the computable point to the dense computable set of the space if necessary. We begin with a number of definitions and claims about these definitions.
	
	\medskip
	
	\begin{definition}
		Given $f \in C(X;\mathbb{R})$ with $-\frac{1}{32} \leq f \leq 1+\frac{1}{32}$ and for each $x \in X$, $f(x) < \frac{1}{4}$ or $f(x) > \frac{3}{4}$, we identify $f$ with the clopen set
		\[ X_f := \left\{ x : f(x) > \frac{1}{2}\right\} = \left\{ x : f(x) \geq \frac{1}{2}\right\}.\]
		We say that such an $f$ is an \textit{indicator function}.
	\end{definition}

	These might best be thought of as \textit{approximate} indicator functions, though for simplicity we will drop the term approximate. The idea is that they approximate the \textit{exact} indicator function $1_Y$ of a clopen set $Y$, where
	\[ 1_Y(x) = \begin{cases}
	
	0 & x \notin Y \\ 1 & x \in Y
	\end{cases}\]
	The main idea of the proof is to represent clopen subsets of $X$ by a corresponding indicator function in $M$. It is easy, for example, to see when two indicator functions represent the same clopen set.

	\begin{remark}\label{rem:eq}
		If $f,g \in C(X;\mathbb{R})$ are indicator functions, and $d(f,g) \leq \frac{1}{4}$, then $X_f = X_g$.
	\end{remark}
	
	Note that the exact indicator function $1_Y$ of a clopen set may not be in $M$, but there will always be an approximate indicator function in $M$, namely a close enough approximation to $1_Y$.
	
	\begin{remark}\label{rem:exist}
		Suppose that $Y$ is a clopen set and $\epsilon > 0$. Then there is an indicator function $f \in M$ such that $X_f = Y$ and $d(1_Y,f) < \epsilon$.
	\end{remark}

	To build the Boolean algebra of clopen sets, we will want to split a clopen set $Y$ into a disjoint union of two clopen sets, and then split those, and so on. So we need to see how this corresponds to indicator functions. The easiest case is when two indicator functions split  the whole space $X$. Recall that $p_1$ is a fixed approximation to the constant function~1.

	\begin{definition}\label{def:2-part}
		Given $f,g \in M$, we say that $f$ and $g$ form a 2-partition if:
		\begin{enumerate}
			\item $d(0,f) \leq 1$ and $d(0,g) \leq 1$;
			\item $d(p_1,f) \leq 1$ and $d(p_1,g) \leq 1$;
			\item $d(p_1,f+g) \leq \frac{1}{32}$;
			\item for all $q \in C(X;\mathbb{R})$ with $d(0,q) > \frac{1}{64}$, at least one of the distances $d(0,f+q)$, $d(0,f-q)$, $d(0,g+q)$, or $d(0,g-q)$ is $\geq 1 + \frac{1}{128}$.
		\end{enumerate}
	\end{definition}
	
	Note that this property is $\Pi^0_1$; it suffices to check (4) for $q \in M$.

	\begin{lemma}\label{claim:2-partition}
		Suppose that $f,g$ is a 2-partition. Then $f$ and $g$ are indicator functions, and $X$ is a disjoint union
		\[ X = X_{f} \sqcup X_{g}.\]
	\end{lemma}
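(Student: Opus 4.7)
The plan is to establish the two conclusions in sequence: first that $f$ and $g$ are indicator functions (this is where condition~(4) does the real work), and then that $X = X_f \sqcup X_g$ (which follows directly from condition~(3)).

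First I would extract pointwise bounds. Condition~(1) gives $-1 \leq f, g \leq 1$. Combining condition~(2) with $\|p_1 - 1\|_\infty < \tfrac{1}{32}$ yields $\|f - 1\|_\infty, \|g - 1\|_\infty < 1 + \tfrac{1}{32}$, so $f, g \geq -\tfrac{1}{32}$; hence $f, g$ take values in $[-\tfrac{1}{32}, 1]$. Similarly, condition~(3) combined with $\|p_1 - 1\|_\infty < \tfrac{1}{32}$ gives $\|f + g - 1\|_\infty < \tfrac{1}{16}$, i.e.\ $(f + g)(x) \in [1 - \tfrac{1}{16}, 1 + \tfrac{1}{16}]$ for every $x \in X$.

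The main step is showing $f$ takes no value in $[\tfrac{1}{4}, \tfrac{3}{4}]$. Suppose for contradiction $f(x_0) \in [\tfrac{1}{4}, \tfrac{3}{4}]$. By the bound on $f+g$, also $g(x_0) \in [\tfrac{3}{16}, \tfrac{13}{16}]$. By continuity of $f$ and $g$, there is an open neighborhood $U$ of $x_0$ on which both $f$ and $g$ take values in the slightly larger window $[\tfrac{1}{16}, \tfrac{15}{16}]$. Since $X$ is a Stone space, I can find a clopen set $V$ with $x_0 \in V \subseteq U$. Put $q := \tfrac{1}{32}\,\mathbf{1}_V \in C(X;\mathbb{R})$; then $\|q\|_\infty = \tfrac{1}{32} > \tfrac{1}{64}$. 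A direct calculation verifies all four perturbations are controlled: on $V$, $f \pm q \in [\tfrac{1}{32}, \tfrac{31}{32}] \subset [0,1]$, and off $V$ we have $f \pm q = f \in [-\tfrac{1}{32}, 1]$, so $\|f \pm q\|_\infty \leq 1 < 1 + \tfrac{1}{128}$; the same estimate applies verbatim to $g \pm q$. This contradicts condition~(4), so no such $x_0$ exists, and $f$ (and symmetrically $g$) is an indicator function.

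For the partition, I would use $f + g \approx 1$ uniformly. If $x \in X_f$ then $f(x) > \tfrac{3}{4}$, forcing $g(x) < \tfrac{1}{16} + \tfrac{1}{4} = \tfrac{5}{16} < \tfrac{1}{2}$, so $x \notin X_g$; conversely if $x \notin X_f$ then $f(x) < \tfrac{1}{4}$, so $g(x) > \tfrac{11}{16} > \tfrac{1}{2}$, so $x \in X_g$. Hence $X = X_f \sqcup X_g$. The only real obstacle is the constant bookkeeping in building $q$; the essential conceptual point is that total disconnectedness of $X$ lets me take $q$ to be a \emph{scaled clopen indicator}, which is exactly what keeps all four of $f \pm q$ and $g \pm q$ inside $[-1, 1]$ rather than spilling past $1 + \tfrac{1}{128}$ — a continuous bump function would not give such tight control.
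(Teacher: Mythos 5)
Your proof is correct and takes essentially the same route as the paper's: the same pointwise bounds extracted from (1)--(3), the same $\tfrac{1}{32}$-scaled indicator of a clopen neighbourhood used to violate condition (4), and the same use of $f+g\approx 1$ for disjointness (the paper merely orders the deductions differently, first forcing $\max(f(x),g(x))>\tfrac{7}{8}$ from (4) and then excluding the overlap via (3)). Your closing aside that a continuous bump function would fail is not quite right---a Urysohn bump $0\le q\le \tfrac{1}{32}$ peaking at $x_0$ and vanishing off $U$ yields the same estimates, since $X$ is compact Hausdorff and hence normal---but this is only a remark and does not affect the correctness of your argument.
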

	\begin{proof}
		By (1) and (2), for each $x \in X$, $-\frac{1}{32} \leq f(x) \leq 1+\frac{1}{32}$ and $-\frac{1}{32} \leq g(x) \leq 1+\frac{1}{32}$.
		
		Using (4), we argue that for each $x$ either $f(x) > 1 - \frac{1}{8}$ or $g(x) > 1 - \frac{1}{8}$. 
		Suppose instead that for some $x$, $f(x),g(x) \leq 1-\frac{1}{8}$. Then there is a clopen set $U \ni x$ such that for all $y \in U$ we have $f(y),g(y) < 1-\frac{1}{16}$. Define $q(y) = \frac{1}{32}$ for all $y \in U$, and $q(y) = 0$ for all $y \notin U$. Then all of the distances $d(0,f+q)$, $d(0,f-q)$, $d(0,g+q)$, and $d(0,g-q)$ are $< 1 + \frac{1}{128}$, contradicting (4). So we conclude that for each $x$ either $f(x) > 1 - \frac{1}{8}$ or $g(x) > 1 - \frac{1}{8}$.

		Now we argue that for each $x$, it is not the case that both $f(x) > \frac{1}{4}$ and $g(x) > \frac{1}{4}$. Indeed, if this was the case, then as either $f(x) > 1 - \frac{1}{8}$ or $g(x) > 1 - \frac{1}{8}$, we would have $f(x) + g(x) > 1 - \frac{1}{8} + \frac{1}{4} = 1 + \frac{1}{8}$, contradicting $d(p_1,f+g) \leq \frac{1}{32}$ as in (3).
		So we have shown that $f$ and $g$ are both indicator functions, and $X = X_{f} \sqcup X_{g}$. 
		$\qedhere$
	\end{proof}

	With more functions, there is something a little more complicated going on. Suppose that we tried to define when $f_1,\ldots,f_n \in M$ form an $n$-split by taking Definition \ref{def:2-part} and replacing ($3$) by ($3'$): $d(p_1,f_1+\cdots+f_n) \leq \frac{1}{32}$. We'd want to show that $X = X_{f_1} \sqcup \cdots \sqcup X_{f_n}$. The problem is that, for example, for a given $x$ and when $n$ is large, even though each $f_i(x)$ might be very small, the sum $f_1(x) + \cdots + f_n(x)$ might be large. (There are other similar ways problems could arise, such as with many $f_i(x)$ being negative.)  One way to get around this is to think of any splitting of the $n$-partition into two halves to form a 2-partition; see (1) in the definition below.

	\begin{definition}\label{def:refine}
		Given indicator functions $f_1,\ldots,f_n \in M$ and $g',g'' \in M$, we say that $h_1',h_1'',\ldots,h_n',h_n'' \in M$ is a \emph{partition of unity refining} $f_1,\ldots,f_n$ \emph{by} $g',g''$ if:
		\begin{enumerate}
			\item for every splitting $\Lambda \sqcup \Gamma = \{h_1',h_1'',\ldots,h_n',h_n''\}$, the functions $\sum_{t \in \Lambda} t$ and $\sum_{t \in \Gamma} t$ form a 2-partition;
			\item $d(f_i,h_i'+h_i'') \leq \frac{1}{4}$;
			\item $d(g',h_1'+\cdots+h_n') \leq \frac{1}{4}$ and $d(g'',h_1''+\cdots+h_n'') \leq \frac{1}{4}$.
		\end{enumerate}
	\end{definition}
	Since being a 2-partition is a $\Pi^0_1$ property, this property is also $\Pi^0_1$.

	\begin{remark}\label{rem:empty} Note that it is consistent with the definition above that some of the $h'_i$ or $h_j''$ can be indicator functions of the empty set.  Nonetheless, we can decide whether an indicator function $\xi$ represents $\emptyset$ by checking  if $d(0, \xi) > \dfrac{3}{4}$ or $d(0, \xi)<  \dfrac{1}{4}$; note that these properties are exclusive for an indicator function.
	\end{remark}
	
	\begin{lemma}
		Suppose that $h_1',h_1'',\ldots,h_n',h_n'' \in M$ is a partition of unity refining $f_1,\ldots,f_n$ by $g',g''$. Then each $h_i'$ and $h_i''$ is an indicator function and $X$ is a disjoint union
		\[ X = X_{h_1'} \sqcup X_{h_1''} \sqcup \cdots \sqcup  X_{h_n'} \sqcup X_{h_n''}.\]
		We have
		\[ X_{g'} = X_{h_1'} \sqcup \cdots \sqcup  X_{h_n'},\]
		\[ X_{g''} = X_{h_1''} \sqcup \cdots \sqcup X_{h_n''}, \]
		and, for each $i$,
		\[ X_{f_i} = X_{h_i'} \sqcup X_{h_i''}.\]
	\end{lemma}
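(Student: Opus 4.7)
My plan is to derive everything from clause (1) of Definition \ref{def:refine} by applying Lemma \ref{claim:2-partition} to various splittings $\Lambda \sqcup \Gamma$ of the $2n$-element set $H := \{h_1',h_1'',\ldots,h_n',h_n''\}$. Clauses (2) and (3), together with Remark \ref{rem:eq}, will only enter at the very end to identify the clopen sets we have built with $X_{f_i}$, $X_{g'}$, and $X_{g''}$.

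The first step is to take $\Lambda = \{h\}$ for a single $h \in H$ and $\Gamma = H \setminus \{h\}$: since $h$ itself is one side of a 2-partition, Lemma \ref{claim:2-partition} immediately says $h$ is an indicator function. For pairwise disjointness of the $X_h$, I apply the same lemma to $\Lambda = \{h,h'\}$ with $h \neq h'$, which makes $h+h'$ an indicator function; if both $h(x)$ and $h'(x)$ exceeded $3/4$ then $(h+h')(x) > 3/2$, violating the $\leq 1+1/32$ bound built into the definition of indicator function. Next, using $\Lambda = \{h_1',\ldots,h_n'\}$ and $\Gamma = \{h_1'',\ldots,h_n''\}$, both sums become indicator functions with $X = X_{\sum h_i'} \sqcup X_{\sum h_i''}$, and clause (3) plus Remark \ref{rem:eq} give $X_{g'} = X_{\sum h_i'}$ and $X_{g''} = X_{\sum h_i''}$.

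To upgrade these to $X_{g'} = \bigsqcup_i X_{h_i'}$ and $X_{g''} = \bigsqcup_i X_{h_i''}$, I will prove the following claim by induction on $|S|$: for every nonempty proper subset $S \subseteq H$ and every $x \in X$ with $h(x) < 1/4$ for all $h \in S$, the value $(\sum_{h \in S} h)(x)$ is itself $<1/4$. The base case is immediate; for the induction step, apply the hypothesis to $S \setminus \{h_0\}$ for some $h_0 \in S$ to obtain $(\sum_{h \in S} h)(x) < 1/2$, and then use that $\sum_S h$ is an indicator function (by the 2-partition property with $\Lambda = S$) to rule out the interval $[1/4, 7/8]$ and conclude $<1/4$. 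Specialising to $S = \{h_1',\ldots,h_n'\}$ shows that every $x \in X_{g'} = X_{\sum h_i'}$ lies in some $X_{h_i'}$, and similarly for $g''$. Finally, for the identity $X_{f_i} = X_{h_i'} \sqcup X_{h_i''}$, clause (2) and Remark \ref{rem:eq} give $X_{f_i} = X_{h_i'+h_i''}$; the splitting $\Lambda = \{h_i',h_i''\}$ makes $h_i'+h_i''$ an indicator function, and the two-term version of the above argument, together with the pairwise disjointness already established, forces $X_{h_i'+h_i''} = X_{h_i'} \sqcup X_{h_i''}$.

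The main obstacle I expect is the inductive claim, where one must ensure that $S$ remains a proper nonempty subset of $H$ throughout (so that the 2-partition property genuinely applies with $\Gamma \neq \emptyset$, which appears to be the intended reading of Definition \ref{def:refine}) and that the sharper gap between $<1/4$ and $>7/8$, rather than the weaker $<1/4$/$>3/4$ dichotomy coming from the raw definition of an indicator function, is available; fortunately the proof of Lemma \ref{claim:2-partition} establishes precisely this stronger dichotomy for both sides of any 2-partition, which is what makes the induction close cleanly.
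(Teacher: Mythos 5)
Your proposal is correct and follows essentially the same route as the paper: everything is extracted from clause (1) by applying Lemma \ref{claim:2-partition} to the various splittings of $\{h_1',h_1'',\ldots,h_n',h_n''\}$, the upper bound $\leq 1+\frac{1}{32}$ on indicator functions gives pairwise disjointness, and clauses (2) and (3) together with Remark \ref{rem:eq} enter only at the end to identify the resulting clopen sets with $X_{f_i}$, $X_{g'}$, $X_{g''}$. The one point of divergence is in establishing $X_{h_\Lambda}=\bigsqcup_{t\in\Lambda}X_t$: the paper peels off one element of $\Lambda$ at a time, proving both inclusions simultaneously via $X_{h_\Lambda}=X_{h_{\Lambda\setminus\{t\}}}\sqcup X_t$, whereas your induction yields only the forward inclusion $X_{h_S}\subseteq\bigcup_{t\in S}X_t$, so the reverse inclusion $X_{h_i'}\subseteq X_{g'}$ must be recovered from the global partition $X=X_{g'}\sqcup X_{g''}$ together with pairwise disjointness --- a valid complementation step that you leave implicit and should spell out. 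Finally, your worry about needing the sharper $>\frac{7}{8}$ threshold is unfounded: the dichotomy $f(x)<\frac{1}{4}$ or $f(x)>\frac{3}{4}$ built into the definition of an indicator function already closes the induction, since a value below $\frac{1}{2}$ cannot exceed $\frac{3}{4}$.
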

	\begin{proof}
		For a given $\Lambda \subseteq \{h_1',h_1'',\ldots,h_n',h_n''\}$, denote by $h_\Lambda$ the function $\sum_{t \in \Lambda} t$. By (1) and Lemma \ref{claim:2-partition}, for every $\Lambda \subseteq \{h_1',h_1'',\ldots,h_n',h_n''\}$, $h_{\Lambda}$ is an indicator function. Moreover, for every splitting $\Lambda \sqcup \Gamma = \{h_1',h_1'',\ldots,h_n',h_n''\}$, we have a partition
		\[ X = X_{h_\Lambda} \sqcup X_{h_\Gamma}.\]
		In particular, the $h_i'$ and $h_{i}''$ are indicator functions as, e.g., $h_i' = h_\Lambda$ for $\Lambda = \{h_i'\}$.
		
		Now we want to argue that for each $\Lambda \subseteq \{h_1',h_1'',\ldots,h_n',h_n''\}$, we have a disjoint union
		\[ X_{h_{\Lambda}} = \bigsqcup_{t \in \Lambda} X_t. \]
		It suffices to show that given $\Lambda$ and $t \in \Lambda$, there is a disjoint union
		\[ X_{h_\Lambda} = X_{h_{\Lambda - \{t\}}} \sqcup X_t.\]
		Since $h_\Lambda$, $h_{\Lambda - \{t\}}$, and $h_t$ are all indicator functions, and $h_\Lambda = h_{\Lambda - \{t\}} + h_t$:
		\begin{itemize}
			\item For a given $x \in X$ it cannot be that both $h_{\Lambda - \{t\}}(x) > \frac{3}{4}$ and $h_t(x) > \frac{3}{4}$, as then we would have $h_\Lambda(x) > \frac{3}{2}$. Thus $X_{h_{\Lambda - \{t\}}}$ and $X_t$ are disjoint.
			\item For a given $x \in X$, if $h_{\Lambda - \{t\}}(x) > \frac{3}{4}$ then as $h_t(x) \geq - \frac{1}{32}$, $h_\Lambda(x) > \frac{1}{2}$. Thus $X_{h_{\Lambda - \{t\}}} \subseteq X_{h_\Lambda}$. Similarly, $X_t \subseteq X_{h_\Lambda}$.
			\item For a given $x \in X$, if $h_{\Lambda - \{t\}}(x) < \frac{1}{4}$ and $h_t(x) < \frac{1}{4}$, then $h_\Lambda(x) < \frac{1}{2}$. Thus $X_{h_\Lambda} \subseteq X_{h_{\Lambda - \{t\}}} \cup X_t$.
		\end{itemize}
		Putting this all together, we see that
		\[ X_{h_\Lambda} = X_{h_{\Lambda - \{t\}}} \sqcup X_t.\]
		By repeated applications, we conclude that for each $\Lambda \subseteq \{h_1',h_1'',\ldots,h_n',h_n''\}$,
		\[ X_{h_{\Lambda}} = \bigsqcup_{t \in \Lambda} X_t. \tag{$*$}\]
		
		Applying ($*$) to any splitting $X = X_{h_\Lambda} \sqcup X_{h_\Gamma}$ we immediately get that
		\[ X = X_{h_1'} \sqcup X_{h_1''} \sqcup \cdots \sqcup  X_{h_n'} \sqcup X_{h_n''}.\]
		
		For each $i$, by (2) we have $d(f_i,h_i'+h_i'') \leq \frac{1}{4}$. Since $f_i$ and $h_i'+h_i''$ are both indicator functions, this implies easily that
		\[ X_{f_i} = X_{h_i' + h_i''} = X_{h_i'} \sqcup X_{h_i''}.\]
		For the first equality we use Remark \ref{rem:eq}, and for the second we use ($*$).
		
		By (3), we have that $d(g',h_1'+\cdots+h_n') \leq \frac{1}{4}$. Using this and ($*$), we get that
		\[ X_{g'} = X_{h_1'+\cdots+h_n'} = X_{h_1'} \sqcup \cdots \sqcup  X_{h_n'},\]
		Similarly,
		\[ X_{g''} = X_{h_1''+\cdots+h_n''} = X_{h_1''} \sqcup \cdots \sqcup X_{h_n''}.\qedhere \]	
	\end{proof}

\medskip

Recall that our goal is to build a $\Delta^0_2$ copy of the Boolean algebra of clopen sets in $X$, such that the atoms are $\Delta^0_2$. For this we will need a way to tell in a $\Delta^0_2$ way whether an indicator function $f$ represents an atom $X_f$. The definitions above are a little too complicated for this; the problem is that they involve searching for indicator functions. In the following definition, the $g$ and $h$ need not be indicator functions, so they witness that $f$ does split without finding an actual splitting.

\begin{definition}
	Given $f \in M$, we say that $f$ \textit{splits} if there are $g,h \in C(X;\mathbb{R})$ such that:
	\begin{enumerate}
		\item $d(p_1,g) < 1$ and $d(p_1,h) < 1$;
		\item $\frac{3}{4} < d(0,g) < 1$ and $\frac{3}{4} < d(0,h) < 1$;
		\item $d(f,g+h) < \frac{1}{32}$.
	\end{enumerate}
\end{definition}
This is $\Sigma^0_1$; again, it suffices to check for $f,g \in M$.
We will apply the above notion only in case when $f$ is an indicator function of a non-empty set; see Remark~\ref{rem:empty}. (If $f$ represents $\emptyset$ then it actually does not split.)

\begin{lemma}
	Suppose that $f$ is an indicator function and that $f$ splits. Then $X_f$ is not an atom.
\end{lemma}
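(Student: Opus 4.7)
The plan is to argue by contradiction. Suppose $X_f$ is an atom; since $X$ is a Stone space this forces $X_f = \{p\}$ for some isolated point $p \in X$, and I will use the witnesses $g, h$ from the splitting hypothesis to derive a numerical contradiction concentrated at $p$.

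First I would extract pointwise information about $g$ and $h$. Condition~(1) gives $\|g - p_1\|_\infty < 1$, which together with $\|p_1 - 1\|_\infty < 1/32$ (recall $d(p_1,1) < 1/32$) yields $g(x) > -1/32$ for every $x \in X$, and symmetrically $h(x) > -1/32$. Condition~(2) asserts $\|g\|_\infty = d(0,g) > 3/4$, so by compactness of $X$ there is a point $y$ with $|g(y)| > 3/4$; the lower bound just obtained rules out the negative case, so in fact $g(y) > 3/4$. A symmetric statement holds for $h$ at some point $z$.

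Next I would localise where such large values of $g$ and $h$ can occur. For any $x \neq p$ we have $f(x) < 1/4$, since $x \notin X_f$; by condition~(3) this gives $g(x) + h(x) < 1/4 + 1/32$, and combined with $h(x) > -1/32$ we obtain $g(x) < 1/4 + 2/32 < 3/4$. Consequently the witness $y$ from the previous step must equal $p$, and likewise $z = p$; hence $g(p), h(p) > 3/4$, so $g(p) + h(p) > 3/2$. On the other hand $f(p) \leq 1 + 1/32$ because $f$ is an indicator function, and condition~(3) then yields $g(p) + h(p) < f(p) + 1/32 \leq 1 + 1/16$, which contradicts $g(p) + h(p) > 3/2$. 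The only ``obstacle'' is the bookkeeping of the constants $1/32$, $3/4$, etc., across the various definitions; no deeper idea is required beyond the observation that any large value of $g$ or $h$ must concentrate at the alleged unique point of $X_f$, and two such large values cannot simultaneously fit inside the narrow slack allowed by condition~(3).
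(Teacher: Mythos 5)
Your proof is correct and is essentially the paper's argument in contrapositive form: both locate points where $g$ and $h$ exceed $3/4$ (resp.\ $5/8$), show such points must lie in $X_f$, and use the bound $f \leq 1+\tfrac{1}{32}$ together with condition~(3) to rule out their coincidence. The paper concludes directly that $X_f$ contains two distinct points, while you assume $X_f=\{p\}$ and derive a contradiction; the constants check out either way.
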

\begin{proof}
	By (1), we have that $d(p_1,g) < 1$ and $d(p_1,h) < 1$. By (2), we have that $d(0,g) < 1$ and $d(0,h) < 1$. Thus, for all $x$, $-\frac{1}{32} < g(x) < 1+\frac{1}{32}$ and $-\frac{1}{32} < h(x) < 1+\frac{1}{32}$.
	
	Since $\frac{3}{4} < d(0,g)$, we can choose $x$ with $g(x) > \frac{5}{8}$. Then by (3),
	\[ f(x) > g(x)+h(x) - \frac{1}{32} > \frac{1}{2}.\]
	So $x \in X_f$. Similarly, we can choose $y$ with $h(y) > \frac{5}{8}$, and $y \in X_f$.
	
	Finally, we claim that $x \neq y$. Indeed, if $x = y$, then $g(x) +h(x) > \frac{5}{4}$, and $f(x) < 1+\frac{1}{32}$, contradicting (3). So $X_f$ contains at least two distinct elements. \end{proof}

	\medskip
	
	We are now ready for the construction. 
	We are building a $\Delta^0_2$ copy of the Boolean algebra of clopen sets in $X$, with the atom relation being $\Delta^0_2$ as well.  Denote by $\mc{B}(X)$ this Boolean algebra of clopen sets. The construction will use a $\mathbf{0}'$ oracle, and hence can see whether elements form 2-partitions, partitions of unity, or split. At each stage, we will have a finite subalgebra $\mc{B}_s$, extended at each subsequent stage, such that $\mc{B} = \bigcup_s \mc{B}_s$ is isomorphic to the algebra of clopen sets in $X$. At each stage $s$, $\mc{B}_s$ will be a finite Boolean algebra whose atoms are all indicator functions from $M$, and the elements of $\mc{B}_s$ can be thought of as formal terms in these indicator functions. At every stage $s$, we will have an embedding $\varphi_s \colon \mc{B}_s \to \mc{B}(X)$ induced by mapping an atom $f \in M$ of $\mc{B}_s$ to $X_{f}$. The union $\varphi = \bigcup_s \varphi_s \colon \mc{B} \to \mc{B}(X)$ will be an isomorphism. Some of the atoms of $\mc{B}$ will be labeled with the atom relation $\At$, signifying that they will be atoms of $\mc{B}$; those not labeled $\At$ will later be split.
	
	Let $(q_s,r_s)_{s \in \omega}$ be a listing of all pairs of elements $q_s,r_s \in M$. The idea is that at stage $s+1$, if $q_s$ and $r_s$ are indicator functions splitting the domain $X$, then the atoms of $\mc{B}_{s+1}$ will induce a refinement of this splitting.
	
	\medskip
	
\noindent \emph{Construction.} 	 Begin with  the Boolean algebra $\mc{B}_0 = \{0, p_1\}$ generated by  $p_1$.
	
	\medskip
	
	At stage $s+1$, suppose that $\mc{B}_s$ is the Boolean algebra with atoms $f_1,\ldots,f_n,g_1,\ldots,g_m$, with $\At(f_1),\ldots,\At(f_n)$ and $\neg \At(g_1),\ldots,\neg\At(g_m)$. First, ask whether $q_s,r_s$ is a 2-partition. If not, then set $\mc{B}_{s+1} = \mc{B}_s$ and end this stage of the construction. If $q_s,r_s$ is a 2-partition, look for $f_1',f_1'',\ldots,f_n',f_n'' \in M$ and $g_1',g_1'',\ldots,g_m',g_m'' \in M$ such that:
	\begin{itemize}
		\item $f_1',f_1'',\ldots,f_n',f_n'',g_1',g_1'',\ldots,g_m',g_m''$ is a partition of unity refining $f_1,\ldots,f_n,g_1,\ldots,g_m$ by $q_s,r_s$.
	\end{itemize}

	By Remark~\ref{rem:empty}, we can check which indicator functions represent the empty set; we call such an indicator function trivial. For instance, since $f_1,\ldots,f_n$ are the indicator functions of atoms, for each $i = 1,\ldots,n$, one of $f_i'$ or $f_i''$ is an indicator function for the empty set, and the other is an indicator function for $X_{f_i}$. 	Then let $\mc{B}_{s+1}$ be the Boolean algebra generated by $f_1, \ldots, f_n, g_1',g_1'',\ldots,g_m',g_m''$ where the trivial indicator functions are set equal to $0$. Embed $\mc{B}_{s}$ into $\mc{B}_{s+1}$ by mapping $f_i \mapsto f_i$ and $g_i \mapsto g_i' \vee g_i''$. Put $\At(f_1),\ldots,\At(f_n)$. For each $i$, put $\At(g_i')$ if $g_i'$ does not split, and similarly for $g_i''$.

	\
	
	\noindent \emph{Verification.} As we have already explained above, $\mathbf{0}'$ is sufficient to arrange the construction, in the sense that every property that we need to check is $\Delta^0_2$.
	We begin with the lemma below which says that every stage will eventually successfully finish its search.\
	\begin{lemma}
		If $q_s,r_s$ form a 2-partition, then there exist $f_1',f_1'',\ldots,f_n',f_n'' \in M$ and $g_1',g_1'',\ldots,g_m',g_m'' \in M$ such that:
		\begin{itemize}
			\item $f_1',f_1'',\ldots,f_n',f_n'',g_1',g_1'',\ldots,g_m',g_m''$ is a partition of unity refining $f_1,\ldots,f_n,g_1,\ldots,g_m$ by $q_s,r_s$.
		\end{itemize}
		(Here, $f_1,\ldots,f_n,g_1,\ldots,g_m$ should be thought of as being the atoms of $\mc{B}_{s}$.  We intend to define $\mc{B}_{s+1}$ to be the  Boolean algebra with atoms $f_1,\ldots,f_n,g_1',g_1'',\ldots,g_m',g_m''$.)
		
	\end{lemma}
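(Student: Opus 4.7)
The plan is to define the refinement by explicitly choosing indicator functions representing the natural clopen ``intersections'' of the current partition with the splitting $X_{q_s} \sqcup X_{r_s}$, and then verify the three conditions of Definition~\ref{def:refine} by using approximations that are close enough to the exact indicator functions.

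First, note that since $q_s, r_s$ is a 2-partition, Lemma~\ref{claim:2-partition} gives $X = X_{q_s} \sqcup X_{r_s}$. Since $f_1,\ldots,f_n,g_1,\ldots,g_m$ are the atoms of $\mc{B}_s$, we have $X = \bigsqcup_i X_{f_i} \sqcup \bigsqcup_j X_{g_j}$ (this is maintained as an invariant of the construction). Intersecting, we obtain the refined clopen partition
\[
X = \bigsqcup_{i=1}^{n} \bigl( (X_{f_i}\cap X_{q_s}) \sqcup (X_{f_i}\cap X_{r_s}) \bigr) \;\sqcup\; \bigsqcup_{j=1}^{m} \bigl( (X_{g_j}\cap X_{q_s}) \sqcup (X_{g_j}\cap X_{r_s}) \bigr).
\]
By Remark~\ref{rem:exist}, for any $\epsilon>0$ we can pick indicator functions $f_i', f_i'', g_j', g_j''\in M$ whose associated clopen sets are $X_{f_i}\cap X_{q_s}$, $X_{f_i}\cap X_{r_s}$, $X_{g_j}\cap X_{q_s}$, $X_{g_j}\cap X_{r_s}$ respectively, and each within sup-distance $\epsilon$ of the corresponding exact indicator $1_{(\cdot)}$. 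We will take $\epsilon$ much smaller than $1/128$ divided by the total number of summands $2(n+m)$.

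Now verify the three clauses of Definition~\ref{def:refine}. For (2): the two exact indicators sum to $1_{X_{f_i}}$ since $X_{q_s}\sqcup X_{r_s}=X$, so $\|f_i' + f_i'' - 1_{X_{f_i}}\|_\infty \leq 2\epsilon$. Because $f_i$ is an indicator function on the compact space $X$ with $f_i(x)<1/4$ or $f_i(x)>3/4$ strictly, we have $\|f_i - 1_{X_{f_i}}\|_\infty < 1/4$ strictly; hence $d(f_i, f_i'+f_i'') \leq 1/4$ for small $\epsilon$. Clause (3) is analogous (the sum over $i$ of $f_i'$ plus the sum over $j$ of $g_j'$ approximates $1_{X_{q_s}}$, and $q_s$ itself is within strictly less than $1/4$ of $1_{X_{q_s}}$). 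For (1), given a splitting $\Lambda\sqcup\Gamma$ of $\{f_1',f_1'',\ldots,g_m',g_m''\}$, let $A_\Lambda$ and $A_\Gamma$ be the unions of the corresponding clopen pieces, so $A_\Lambda\sqcup A_\Gamma = X$; the sums $\bar f = \sum_{t\in\Lambda} t$ and $\bar g = \sum_{t\in\Gamma}t$ satisfy $\|\bar f - 1_{A_\Lambda}\|_\infty, \|\bar g - 1_{A_\Gamma}\|_\infty \leq 2(n+m)\epsilon$. From this, clauses (1)--(3) of Definition~\ref{def:2-part} are immediate (using $\|p_1 - 1\|_\infty < 1/32$ for (2) and (3)).

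The only mildly delicate point, and really the main obstacle of the verification, is clause (4) of the 2-partition: given $q\in C(X;\mathbb{R})$ with $d(0,q)>1/64$, pick $x\in X$ with $|q(x)|>1/64$. If $x\in A_\Lambda$ then $\bar f(x)$ is within $2(n+m)\epsilon$ of $1$; depending on the sign of $q(x)$, either $\bar f(x)+q(x)$ or $\bar f(x)-q(x)$ exceeds $1 + 1/64 - 2(n+m)\epsilon > 1 + 1/128$, so one of $d(0,\bar f\pm q)$ exceeds $1+1/128$. The case $x\in A_\Gamma$ is symmetric, using $\bar g$. Choosing $\epsilon$ strictly smaller than $1/(512(n+m))$ at the outset makes all the above inequalities hold simultaneously, so the desired partition of unity exists. \qed
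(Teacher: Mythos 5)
Your overall strategy (intersect the current atoms with $X_{q_s}\sqcup X_{r_s}$ and realize the pieces by indicator functions in $M$) is the same as the paper's, but there is a genuine gap in the step you dismiss as ``immediate'': the verification of clauses (1) and (2) of Definition~\ref{def:2-part} for the sums $\sum_{t\in\Lambda}t$. You choose each $t$ within sup-distance $\epsilon$ of the \emph{exact} indicator $1_{X_t}$ via Remark~\ref{rem:exist}. But $M$ is only a dense set; the approximant may overshoot, taking values up to $1+\epsilon$ on $X_t$ and down to $-\epsilon$ off it. Then already for the singleton splitting $\Lambda=\{f_1'\}$ one may have $d(0,f_1')=\|f_1'\|_\infty>1$, and more generally $\sum_{t\in\Lambda}t$ can exceed $1$ (or dip below $0$, making $d(p_1,\sum_{t\in\Gamma}t)>1$). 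Since Definition~\ref{def:2-part} demands $d(0,\cdot)\le 1$ and $d(p_1,\cdot)\le 1$ exactly, and shrinking $\epsilon$ never forces the approximants back inside $[0,1]$, this cannot be repaired by taking $\epsilon$ smaller. The paper avoids exactly this problem by approximating not $1_Y$ but the ``tilted'' function equal to $1-2\epsilon$ on the target set and $\epsilon/(4n+4m)$ off it, to within $\epsilon/(8n+8m)$; this guarantees every partial sum of the $2(n+m)$ functions lies strictly in $(0,1-\epsilon)$, which is what makes conditions (1)--(3) of the 2-partition definition hold.

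A secondary, more minor issue of the same flavor: for clause (3) of Definition~\ref{def:2-part} and clauses (2)--(3) of Definition~\ref{def:refine} you rely on strict inequalities such as $d(p_1,1)<\tfrac1{32}$ and $\|f_i-1_{X_{f_i}}\|_\infty<\tfrac14$ to absorb an error of size $2(n+m)\epsilon$, but your choice $\epsilon<1/(512(n+m))$ is made without reference to the (positive but unquantified) slack in those strict inequalities. For a pure existence statement this is repairable by choosing $\epsilon$ after inspecting that slack, but as written the claim that one fixed $\epsilon$ ``makes all the above inequalities hold simultaneously'' is not justified. The overshoot problem in the previous paragraph, however, is the essential defect: it is precisely the point at which the intended proof requires targeting functions bounded away from $0$ and $1$ rather than the exact indicators.
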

	\begin{proof}
		Since $q_s,r_s$ form a 2-partition, $q_s$ and $r_s$ are indicator functions and $X = X_{q_s} \sqcup X_{r_s}$. Let $\epsilon = \frac{1}{512}$.
		
		For each $i$, let $f_i' \in M$ be at distance at most $\epsilon/(8n+8m)$ from
		\[ x \mapsto  
		 \begin{cases}
		\epsilon/(4n+4m) & x \notin X_{q_s} \cap X_{f_i}  \\ 1 - 2 \epsilon & x \in X_{q_s} \cap X_{f_i} \end{cases}\]
		and let $f_i'' \in M$ be at distance at most $\epsilon/(8n+8m)$ from
		\[ x \mapsto  \begin{cases}
		\epsilon/(4n+4m) & x \notin X_{r_s} \cap X_{f_i}  \\ 1 - 2 \epsilon & x \in X_{r_s} \cap X_{f_i}  \end{cases}.\]
		We have that $X_{f_i'} = X_{q_s} \cap X_{f_i}$ and $X_{f_i''} = X_{r_s} \cap X_{f_i}$. Since $X_{f_i}$ is a singleton set, either $X_{f_i'} = \varnothing$ and $X_{f_i''} = X_{f_i}$, or $X_{f_i'} = X_{f_i}$ and $X_{f_i''} = \varnothing$.
		
		Similarly, for each $i$, let $g_i' \in M$ be at distance at most $\epsilon/(8n+8m)$ from
		\[ x \mapsto  \begin{cases} 
		\epsilon/(4n+4m) & x \notin X_{q_s} \cap X_{g_i}\\ 1 - 2 \epsilon & x \in X_{q_s}\cap X_{g_i} \end{cases}\]
		and let $g_i'' \in M$ be at distance at most $\epsilon/(8n+8m)$ from
		\[ x \mapsto   \begin{cases}
		\epsilon/(4n+4m) & x \notin X_{r_s}\cap X_{g_i} \\ 1 - 2 \epsilon & x \in X_{r_s}\cap X_{g_i} \end{cases}.\]
		We have that $X_{g_i'} = X_{q_s} \cap X_{g_i}$ and $X_{g_i''} = X_{r_s} \cap X_{g_i}$.
		
		Note that \[X_{f_i'} \cup X_{f_i''} = X_{f_i}\] and
		\[X_{g_i'} \cup X_{g_i''} = X_{g_i},\]
		Also,
		\[ X_{q_s} = X_{f_1'} \cup \cdots \cup X_{f_n'} \cup X_{g_1'} \cup \cdots \cup X_{g_m'} \]
		and
		\[ X_{r_s} = X_{f_1''} \cup \cdots \cup X_{f_n''} \cup X_{g_1''} \cup \cdots \cup X_{g_m''}.\]
		
		From the following easy to prove claims, almost everything that we want will follow.
		
		\begin{claim}
			Let $\Lambda \subseteq \{f_1',f_1'',\ldots,f_n',f_n'',g_1',g_1'',\ldots,g_m',g_m''\}$. For each $x$:
			\begin{itemize}
				\item If $x \in X_t$ for some $t \in \Lambda$, then
				\[ 1-3\epsilon < \sum_{\xi \in \Lambda} \xi < 1-\epsilon.\]
				\item If $x \notin X_t$ for any $t \in \Lambda$, then
				\[ 0 < \sum_{\xi \in \Lambda} \xi < \epsilon.\]
			\end{itemize}
		\end{claim}
	
		\begin{claim}
			Let $\Lambda \subseteq \{f_1',f_1'',\ldots,f_n',f_n'',g_1',g_1'',\ldots,g_m',g_m''\}$. Let $h$ be an indicator function. If $X_h = \bigsqcup_{t \in \Lambda} X_t$, then $d(h,\sum_{t \in \Lambda} t) < \frac{1}{4}$.
		\end{claim}
		
		\medskip
		
		To prove the lemma, we must verify the following.
		\begin{enumerate}
			\item For every splitting $\Lambda \sqcup \Gamma = \{f_1',f_1'',\ldots,f_n',f_n'',g_1',g_1'',\ldots,g_m',g_m''\}$, the functions $\sum_{t \in \Lambda} t$ and $\sum_{t \in \Gamma} t$ form a 2-partition:
			
			Fix a splitting $\Lambda \sqcup \Gamma = \{f_1',f_1'',\ldots,f_n',f_n'',g_1',g_1'',\ldots,g_m',g_m''\}$. 
			
			\begin{enumerate}
				\item $d(0,\sum_{t \in \Lambda} t) \leq 1$ and $d(0,\sum_{t \in \Gamma} t) \leq 1$.		
				\item $d(p_1,\sum_{t \in \Lambda} t) \leq 1$ and $d(p_1,\sum_{t \in \Gamma} t) \leq 1$.
				
				\textit{Verification:} For both (a) and (b), note by the  claims that $\sum_{t \in \Lambda} t$ and $\sum_{t \in \Gamma} t$ take values in $[0,1]$. 
				
				\item $d(p_1,f_1'+f_1''+\cdots+f_n'+f_n''+g_1'+g_1''+\cdots+g_m'+g_m'') \leq \frac{1}{32}$.
				
				\textit{Verification:} By the first claim, for each $x$, the value of $f_1'+f_1''+\cdots+f_n'+f_n''+g_1'+g_1''+\cdots+g_m'+g_m''$ at $x$ is between $1-3\epsilon$ and $1-\epsilon$. The function $p_1$, by definition, takes values in the interval $(\frac{31}{32},1)$ and thus has distance at most $\frac{1}{32}$ from $f_1'+f_1''+\cdots+f_n'+f_n''+g_1'+g_1''+\cdots+g_m'+g_m''$. Thus (c) follows.
				
				\item for all $q \in C(X;\mathbb{R})$ with $d(0,q) > \frac{1}{64}$, one of the distances $d(0,\sum_{t \in \Lambda} t+q)$, $d(0,\sum_{t \in \Lambda} t-q)$, $d(0,\sum_{t \in \Gamma} t+q)$, or $d(0,\sum_{t \in \Gamma} t-q)$ is $\geq 1+\frac{1}{128}$.
				
				\smallskip
				
				\textit{Verification:} Given such a $q$, let $x$ be such that $q(x) > \frac{1}{64}$. By the claims, either $\sum_{t \in \Lambda} t(x) > 1-3\epsilon$ or $\sum_{t \in \Gamma} t(x) > 1-3\epsilon$. As  $3\epsilon < \frac{1}{128}$, (d) follows.
			\end{enumerate}
			
			\item $d(f_i,f_i'+f_i'') \leq \frac{1}{4}$ and $d(g_i,g_i'+g_i'') \leq \frac{1}{4}$.
	
			\item $d(q_s,f_1'+\cdots+f_n'+g_1'+\cdots+g_m') \leq \frac{1}{4}$ and $d(r_s,f_1''+\cdots+f_n''+g_1''+\cdots+g_m'') \leq \frac{1}{4}$.
			
			\textit{Verification:} (2) and (3) follow immediately from the two claims.
		\end{enumerate}
	Thus $f_1',f_1'',\ldots,f_n',f_n'',g_1',g_1'',\ldots,g_m',g_m''$ is a partition of unity refining $f_1,\ldots,f_n,g_1,\ldots,g_m$ by $q_s,r_s$.
	\end{proof}

	\begin{lemma}
		$\varphi \colon \mc{B} \to \mc{B}(X)$ is a surjective isomorphism of Boolean algebras.
	\end{lemma}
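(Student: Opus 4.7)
The plan is to check the three ingredients separately: that $\varphi$ is well-defined and is a Boolean algebra homomorphism, that $\varphi$ is injective, and that $\varphi$ is surjective. The first two follow quickly from the preceding structural lemmas; the bulk of the work is surjectivity.

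For well-definedness, I would argue by induction on $s$ that at every stage the atoms $f_1,\ldots,f_n,g_1,\ldots,g_m$ of $\mc{B}_s$ are indicator functions whose associated clopen sets partition $X$ (with trivial atoms collapsed to $0$). This is true at stage $0$ since $X_{p_1}=X$. At stage $s+1$, the previous partition-of-unity lemma applied to the refinement $f_1',f_1'',\ldots,g_m',g_m''$ of $f_1,\ldots,g_m$ by $q_s,r_s$ gives
\[ X = X_{f_1'} \sqcup X_{f_1''} \sqcup \cdots \sqcup X_{g_m'} \sqcup X_{g_m''}, \quad X_{g_i}=X_{g_i'}\sqcup X_{g_i''},\quad X_{f_i}=X_{f_i'}\sqcup X_{f_i''}.\]
Since each $X_{f_i}$ is already a singleton atom of $\mc{B}(X)$ (because $\At(f_i)$ was previously declared), exactly one of $f_i',f_i''$ is trivial, so $f_i$ is indeed identified with the surviving half. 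The embedding $\mc{B}_s \hookrightarrow \mc{B}_{s+1}$ given by $f_i\mapsto f_i$, $g_i \mapsto g_i' \vee g_i''$ is then compatible with the assignment $\varphi_s(g_i) = X_{g_i} = X_{g_i'}\sqcup X_{g_i''} = \varphi_{s+1}(g_i'\vee g_i'')$. Hence $\varphi=\bigcup_s \varphi_s$ is a well-defined Boolean algebra homomorphism into $\mc{B}(X)$.

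Injectivity is essentially immediate from the same fact: if $\varphi(b)=\varnothing$ for some $b\in\mc{B}_s$, then $b$ is a join of atoms, each of whose $X_{f}$ components is either empty (in which case the atom was identified with $0$ during the construction) or contributes a non-empty piece to $\varphi(b)$. Thus $\varphi(b)=\varnothing$ forces $b=0$, and injectivity on each $\mc{B}_s$ yields injectivity of $\varphi$.

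Surjectivity is the main point and will be the main obstacle. Given an arbitrary clopen $Y\subseteq X$, I want $Y$ to lie in the range of $\varphi$. The idea is to produce indicator functions $q,r\in M$ with $X_q=Y$ and $X_r=X\setminus Y$ that together form a 2-partition; once such a pair appears as $(q_s,r_s)$ at some stage, the corresponding refinement in $\mc{B}_{s+1}$ will contain (as a join of certain atoms $f_i',g_i'$) an element mapping to $Y$. To construct such $q,r$, I would imitate the approximation technique from the preceding lemma: use Remark~\ref{rem:exist} with a sufficiently small $\epsilon$ (say $\epsilon=\tfrac{1}{512}$) to pick $q,r\in M$ within $\epsilon$ of approximations to $1_Y$ and $1_{X\setminus Y}$ that take values in $[\epsilon/4,\,1-2\epsilon]$. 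Then conditions (1)--(3) of Definition~\ref{def:2-part} are direct estimates, and (4) follows because for any $\tilde q\in C(X;\mathbb{R})$ with $d(0,\tilde q)>\tfrac{1}{64}$ there is a point $x$ where $|\tilde q(x)|>\tfrac{1}{64}$, and either $q(x)$ or $r(x)$ is close to $1$, which forces one of $d(0,q\pm\tilde q), d(0,r\pm\tilde q)$ above $1+\tfrac{1}{128}$. Since every pair $(q,r)$ of elements of $M$ is enumerated by $(q_s,r_s)$, this $(q,r)$ appears at some stage $s$; the construction then refines $\mc{B}_s$ and produces the desired preimage of $Y$ in $\mc{B}_{s+1}$. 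Combined with the already-verified well-definedness and injectivity, this completes the proof that $\varphi$ is an isomorphism.
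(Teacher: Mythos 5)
Your proposal is correct and follows essentially the same route as the paper: the paper's proof consists precisely of your surjectivity step (given clopen $U$, use Remark~\ref{rem:exist} to obtain a 2-partition $(q,r)$ with $X_q=U$, wait for the stage $s$ with $(q_s,r_s)=(q,r)$, and read off a preimage of $U$ as a join of atoms of $\mc{B}_{s+1}$), with well-definedness and injectivity left implicit in the construction. Your additional verifications — that the pair supplied by Remark~\ref{rem:exist} really satisfies Definition~\ref{def:2-part}, and that $\varphi$ is a well-defined injective homomorphism because the atoms of each $\mc{B}_s$ represent a partition of $X$ into non-empty clopen sets — are sound and merely make explicit what the paper takes for granted.
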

	\begin{proof}
		Given $U$ a clopen subset of $X$, using Remark \ref{rem:exist} let $q,r \in M$ be a 2-partition with $X_q = U$ and $X_r = X - U$. Let $s$ be such that $(q_s,r_s) = (q,r)$. Suppose that $\mc{B}_{s+1}$ is the Boolean algebra with atoms $f_1,\ldots,f_n,g_1',g_1'',\ldots,g_m',g_m''$. Then $U = X_q = \bigcup \{X_{f_i} : i \in I\} \cup X_{g_1'} \cup \cdots \cup X_{g_m'}$ for some $I\subseteq \{1,\dots,n\}$. So $\varphi(\bigvee_{i \in I}f_i \vee g_1' \vee \cdots \vee g_m') = U$.
	\end{proof}
	
To finish the proof of the theorem,  use the cited above theorem of Downey and Jockusch to produce  a computable presentation of the $\Delta^0_2$ Boolean algebra produced by the construction.

\bibliography{ourbib}
\bibliographystyle{alpha}

\end{document}